\newtheorem{thm}{Theorem}
\newtheorem {prop}{Proposition}
\newtheorem {lem}{Lemma}
\newtheorem {rem}{Remark}
\newtheorem {cor}{Corollary}
\theoremstyle{remark}
\DeclareMathOperator\chara{char }
\DeclareMathOperator\bAut{\overline{Aut}\, }
\DeclareMathOperator\Aut{Aut \, }
\newcommand\C{\mathbb C}
\def\H{\mathcal H}
\newcommand\M{{\mathcal M}}
\newcommand\X{\mathcal X}            
\newcommand{\s}{}
\def\>{\rangle}
\def\<{\langle}
\def\bG{\overline G}
\def\p{\mathfrak p}
\def\iso{\cong }
\begin{document}

\title{On the field of moduli of superelliptic curves}

\author{Ruben Hidalgo}
\address{Departamento de Matem\'atica y Estad\'{\i}stica, Universidad de La Frontera, Temuco, Chile.}
\email{ruben.hidalgo@ufrontera.cl}
\thanks{The first author was partially supported by Project Fondecyt 1150003 and Project Anillo ACT 1415 PIA-CONICYT}

\author{Tony Shaska}
\address{Department of Mathematics and Statistics, Oakland University, Rochester, MI, 48386. }
\email{shaska@oakland.edu}

\begin{abstract}

A superelliptic curve $\X$ of genus $g\geq 2$ is not necessarily defined over its field of moduli but it can be defined over a quadratic extension of it. While a lot of work has been done by many authors to determine which  hyperelliptic curves are defined over their field of moduli, less is known for superelliptic curves. 

In this paper we observe that if the reduced group of a genus $g\geq 2$ superelliptic curve $\X$ is different from the trivial or cyclic group, then $\X$ can be defined over its field of moduli; in the cyclic situation we provide a sufficient condition for this to happen. We also determine those families of superelliptic curves of genus at most $10$ which might not be definable over their field of moduli.
\end{abstract}

\keywords{field of moduli, superelliptic curves}

\maketitle


\def\e{\varepsilon}

\newcommand\G{\bar{G}}
\newcommand\normal{\triangleleft }

\def\bC{\bold C}
\def\a{\alpha}
\def\b{\beta}
\def\g{\gamma}
\def\d{\delta} 


\section{Introduction}
Let $k$ be an algebraically closed field of  characteristic zero and $\X$ a genus $g\geq 2$, projective, irreducible algebraic curve defined over $k$. The {\it field of moduli} of $\X$ is the field of definition of the corresponding moduli point $\p=[\X]$  in the moduli space $\M_g$ (see Section \ref{Sec:FOD} for a formal definition of the field of moduli). In general, to determine the field of moduli and to decide if it is a field of definition is  difficult task and it is an active research topic.
Examples of algebraic curves (for $k={\mathbb C}$) which cannot be defined over their field of moduli have been provided  
by Earle \cite{Earle}, Huggins \cite{Hu2} and Shimura \cite{Shimura} for the hyperelliptic situation and by the first author \cite{Hid} and Kontogeorgis \cite{Kontogeorgis} in the non-hyperelliptic situation. 
In other words, $\M_g$ is not a \textit{fine} moduli space.

By results due to Koizumi \cite{Koizumi}, there is always a field of definition that is a finite extension of the field of moduli and if, moreover, the field $k$ has transcendental numbers over the field of moduli (for instance $k={\mathbb C}$), then the field of moduli coincides with the intersection of all the fields of definitions of the curve. On the other hand, if $k=\overline{\mathbb Q}$, then it might be that the intersection of all fields of definitions (inside $k$) is different from the field of moduli.

Investigating the obstruction for the field of moduli to be a field of definition is part of descent theory for fields of definition and has many consequences in arithmetic geometry. Many works have been devoted to this problem, most notably by Weil \cite{Weil}, Shimura \cite{Shimura} and Grothendieck,  among many others.  Weil's criterion \cite{Weil} assures that if a curve has no non-trivial automorphisms then its field
of moduli is a field of definition. On the other extreme, if the curve $\X$ is quasiplatonic (that is, when the quotient orbifold $\X/{\rm Aut}(\X)$ has genus zero and exactly three cone points), then Wolfart \cite{Wolfart} proved that the field of moduli is also a field of definition. Hence, the real problem occurs when the curve has non-trivial automorphism group but the quotient orbifold $\X/{\rm Aut}(\X)$ has non-trivial moduli.

Mestre \cite{Me} provided the first algorithm which determines if the field of moduli is a field of definition for genus two curves with automorphism group of order $2$.   The algorithm gives explicitly an equation of the curve over its field of moduli when such equation exists.  In \cite{ants} it is shown that the field of moduli is a field of definition for genus two curves with automorphism group isomorphic to the dihedral groups $D_4$ or $D_6$.    Cardona and Quer  \cite{Ca} shows that this is true also for genus two curves with automorphism group the Klein $4$-group $D_{2}=V_{4}$.  Most recently Malmendier and Shaska \cite{Ma-sha} generalized the method of Mestre and are able to construct a universal genus two curve defined over a quadratic number field. 
In all these papers the moduli point $\p \in \M_2$  was computed and an explicit equation  of a curve was determined via the invariants of the binary sextics.  

Shaska conjectured in   \cite{conj} and \cite{dihedral}   that  all hyperelliptic curves $\X$ with extra automorphisms (i.e. $|\Aut (\X) | > 2$)  are defined over their field of moduli. This was proved to be incorrect by Huggins \cite{Hu2}  who showed explicit examples of hyperelliptic curves with reduced automorphism group isomorphic to a cyclic group which cannot be defined over their field of moduli (curves defined over $k={\mathbb C}$ which cannot be defined over ${\mathbb R}$ and whose field of moduli is contained inside ${\mathbb R}$).  In the same paper Huggins proved that every hyperelliptic curve whose reduced automorphism group is different from a cyclic group (including the trivial situation) is definable over its field of moduli; even they are hyperelliptically defined over it. Kontogeorgis \cite{Kontogeorgis} generalized the above result to cyclic $p$-gonal curves (where $p$ is a prime integer) and in \cite{HQ} the first author and Quispe generalized the above for curves admitting a subgroup of automorphisms being unique up to conjugation (note that in case of cyclic $p$-gonal curves, in the case that it is definable over its field of moduli, a rational model over its field of moduli may not be in a cyclic $p$-gonal form as was the case of the hyperelliptic situation).

A genus $g \geq 2$ smooth algebraic curve (or closed Riemann surface)   $\X$ is called {\it superelliptic of level $n$}  if there exist an element $\tau \in \Aut( \X)$ of order $n$ such that (i) $\tau$ is central, (ii) the quotient $\X / \< \tau \> $ has genus zero,  (iii) every cone point has order $n$ and (iv) the rotation number of $\tau$ at each of its fixed points is the same (these are very special types of cyclic $n$-gonal curves); in which case $\tau$ is called  a {\it superelliptic automorphism of level $n$} and $H=\langle \tau \rangle$ a {\it superelliptic group of level $n$}.  In Proposition \ref{unicidad} we observe that the superelliptic group of level $n$ is unique.

These superelliptic Riemann surfaces are natural generalizations of hyperelliptic Riemann surfaces (in which case $n=2$ and $\tau$ is given by the hyperelliptic involution), but they are in general non-hyperelliptic ones. Superelliptic curves are the natural cases where one could try to extend the results from hyperelliptic curves; see \cite{b-zhupa}. These are, with some exceptions, the only classes of curves for which we can easily write down equations starting from the automorphism group of the curve.  These curves have equations of the form
$$\X: \quad y^{n}=\prod_{i=1}^{r} (x-a_{i})$$
so that either $r \equiv 0 \mod(n)$ or $\gcd(n,r)=1$, and we may assume that the generator of the superelliptic group of level $n$ is generated by 
 $\tau:(x,y) \mapsto (x,\omega y)$, where $\omega^{n}=1$ (recall that we are assuming $\tau$ to have the same rotation number about all of its fixed points, so we may choose a power of it, of the same order $n$, for which the rotation number is equal to one).

Beshaj and Thompson \cite{b-th} have proved that a superelliptic curve can always be defined over a quadratic extension of its field of moduli. Moreover, they have provided an equation over an at most quadratic extension of its field of moduli using the Shaska invariants which has the above particular cyclic gonal form. At this point, we should remark that if a superelliptic curve $S$ is definable over its field of moduli, then it might be that it is not definable in a cyclic gonal form over it; but it will be so definable over a quadratic extension of it. It has been observed by the second author that the equation provided in \cite{b-th} fails to give a genus $g\geq 2$ superelliptic curve exactly when the curve is quasiplatonic.

In general, the field of moduli of a superelliptic curve $\X$ is not a field of definition, but we will see (cf. Thm.~\ref{thm1}) that 
if $| \Aut (\X) | > n$, where $n$ is the order of $\tau$,  for the majority of  cases,  the field of moduli is a field of definition.  

A superelliptic descent over a field $k$ means that we may find a defining curve with equation $y^{n}=f(x)$ where $f \in k[x]$, which is a slightly more restricted class than the typical descent that ask for a curve over $k$ not necessarily of the above form; see for example \cite{lrs} for the typical descent for hyperelliptic situation for $g=3$ (in there there is an example of a hyperelliptic curve of genus three, which is definable over ${\mathbb Q}$, but it cannot be hyperelliptically defined over ${\mathbb Q}$).

The main goal of this paper is to (partially) describe which superelliptic curves of genus $g \leq 10$ are definable over their field of moduli. In each genus (see Tables 1,2 and 3), we describe those ones which might not be definable over their field of moduli;  all cases which are not in blue are defined over their field of moduli and the blue ones might or not be definable over the field of moduli.

The results of this paper can be very easily extended to positive characteristic in the case when the covering $\X \to \X/\< \tau \>$ is tame.  The list of full automorphisms groups in positive characteristic ($\chara k = p >0$)  was determined in \cite{Sa} and the rest of the proofs should follow easily when $(n, p) =1$.  We only focus in characteristic zero.

The paper is organized as follows.
In Section \ref{Sec:prelim} we give a brief review of superelliptic curves and their automorphism groups. Such groups  were fully classified in \cite{Sa} and the equations of each parametric curve for any given group are given in \cite{Sa-sh} for any given genus $g \geq 2$; in Section \ref{Sec:tablas} we provide such a list for 
$5 \leq g \leq 10$ (see also \cite{super1} and \cite{zhupa}). 
In Section \ref{Sec:FOMSC} we recall some known results which provide sufficient conditions for a curve to be definable over its field of moduli. Using such conditions, 
we give the list of all possible superelliptic curves of genus $2 \leq g \leq 10$ which might not be definable over the field of moduli (in particular, the complementary ones are definable over them). Let us note that 
the case of genus $g=2, 3$ are already well known and $g=4$ can be obtained by \cite{b-zhupa}. 

\medskip

\noindent \textbf{Notation:}  
Throughout this paper,  $k$ denotes an algebraically closed field of characteristic zero  and $\X$ a genus $g\geq 2$, smooth, projective, irreducible, algebraic curve defined over $k$. We will keep the notation used in \cite{Sa} to denote certain groups (i.e, $C_n$ is the cyclic group of order $n$, $D_{m}$ the dihedral group of order $m$, $V_{4}=D_{4}$ the $4$-Klein group, etc).

\section{Preliminaries}\label{Sec:prelim}
In this section, $k$ will be a fixed algebraically closed field of characteristic zero and we denote by ${\rm Gal}(k)$ its group of field automorphisms.

\subsection{The field of moduli and fields of definition}
Let $\X$ be a genus $g$ projective, irreducible, algebraic curve defined over $k$, say given as the common zeroes of the polynomials $P_{1},\ldots, P_{r}$, and let us denote by $G=\Aut (\X)$ the full automorphism group of $\X$.  

If $\sigma \in {\rm Gal}(k)$, then $X^{\sigma}$ will denote the curve defined as the common zeroes of the polynomials $P_{1}^{\sigma},\ldots,P_{r}^{\sigma}$, where $P_{j}^{\sigma}$ is obtained from $P_{j}$ by applying $\sigma$ to its coefficients. In particular, if $\tau$ is also a field automorphism of $k$, then $X^{\tau \sigma}=(X^{\sigma})^{\tau}$.

\subsubsection{Field of definition}
A subfield $k_{0}$ of $k$ is called a {\it field of definition} of $\X$ if there is a curve ${\mathcal Y}$, defined over $k_{0}$, which is isomorphic to $\X$. It is clear that every subfield of $k$ containing $k_{0}$ is also a field of definition of it. In the other direction, a subfield of $k_{0}$ might not be a field of definition of $\X$.

Weil's descent theorem \cite{Weil} provides sufficient conditions for a subfield $k_{0}$ of $k$ to be a field of definition. Let us denote by ${\rm Gal}(k/k_{0})$ the group of field automorphisms of $k$ acting as the identity on $k_{0}$.

\s

\begin{thm}[Weil's descent theorem \cite{Weil}]
Assume that for every $\sigma \in {\rm Gal}(k/k_{0})$ there is an isomorphism $f_{\sigma}:\X \to \X^{\sigma}$ so that
$$f_{\tau\sigma}=f_{\sigma}^{\tau} \circ f_{\tau}, \quad \forall \sigma, \tau \in {\rm Gal}(k/k_{0}).$$

Then there is a curve  ${\mathcal Y}$, defined over $k_{0}$, and there is an isomorphism $R:\X \to {\mathcal Y}$, defined over a finite extension of $k_{0}$, so that $R=R^{\sigma} \circ f_{\sigma}$, for every $\sigma \in {\rm Gal}(k/k_{0})$.
\end{thm}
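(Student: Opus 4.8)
The plan is to prove this by \emph{Galois descent}, converting the given cocycle $\{f_{\sigma}\}$ into a semilinear action of ${\rm Gal}(k/k_{0})$ on the function field $k(\X)$ and then passing to invariants. For each $\sigma \in {\rm Gal}(k/k_{0})$ there is a canonical $\sigma$-semilinear field isomorphism $s_{\sigma}\colon k(\X) \to k(\X^{\sigma})$, obtained by applying $\sigma$ to the coefficients of a rational function expressed in the coordinates; it satisfies $s_{\tau\sigma}=s_{\tau}\circ s_{\sigma}$ and is natural with respect to pullback along $k$-morphisms. Composing with the pullback $f_{\sigma}^{*}\colon k(\X^{\sigma}) \to k(\X)$ of the given isomorphism, I set $\phi_{\sigma}:=f_{\sigma}^{*}\circ s_{\sigma}\colon k(\X)\to k(\X)$, a $\sigma$-semilinear field automorphism of $k(\X)$ that acts on the constants $k$ as $\sigma$ itself.

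The first key step is to verify that $\sigma \mapsto \phi_{\sigma}$ is a genuine group action. Using the naturality square for $s$, namely $s_{\tau}\circ f_{\sigma}^{*}=(f_{\sigma}^{\tau})^{*}\circ s_{\tau}$, together with $s_{\tau}\circ s_{\sigma}=s_{\tau\sigma}$, one computes
\[
\phi_{\tau}\circ\phi_{\sigma}=f_{\tau}^{*}\circ (f_{\sigma}^{\tau})^{*}\circ s_{\tau\sigma}=(f_{\sigma}^{\tau}\circ f_{\tau})^{*}\circ s_{\tau\sigma}=f_{\tau\sigma}^{*}\circ s_{\tau\sigma}=\phi_{\tau\sigma},
\]
where the middle equality is exactly the cocycle hypothesis $f_{\tau\sigma}=f_{\sigma}^{\tau}\circ f_{\tau}$. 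Thus ${\rm Gal}(k/k_{0})$ acts on $k(\X)$ by semilinear field automorphisms.

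Next I would set $F_{0}:=k(\X)^{{\rm Gal}(k/k_{0})}$ and let $\mathcal Y$ be the smooth projective curve over $k_{0}$ whose function field is $F_{0}$. \textbf{Here the main obstacle lies:} one must show that the descent is \emph{effective}, i.e. that the natural map $F_{0}\otimes_{k_{0}}k \to k(\X)$ is an isomorphism, so that $F_{0}$ is a one-dimensional function field over $k_{0}$ with $F_{0}\cap k=k_{0}$ (using $k^{{\rm Gal}(k/k_{0})}=k_{0}$) and $\mathcal Y\times_{k_{0}}k\cong\X$. This is precisely Galois descent for the semilinear representation $k(\X)$, and it rests on the generalized Hilbert~90 / Speiser theorem $H^{1}({\rm Gal}(k/k_{0}),GL_{n})=1$; to invoke it one first reduces to a finite Galois subextension $k_{1}/k_{0}$ over which $\X$ and all the $f_{\sigma}$ are defined and through which the $\phi_{\sigma}$ factor. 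This reduction uses the continuity (local finiteness) of the action, which is the delicate point when $k/k_{0}$ is infinite or transcendental.

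Finally, the isomorphism $R\colon\X\to\mathcal Y$ is the one whose comorphism $R^{*}$ is the canonical identification $F_{0}\otimes_{k_{0}}k\xrightarrow{\sim}k(\X)$ produced above, hence defined over a finite extension of $k_{0}$. Because $F_{0}$ is by construction the fixed field of the $\phi_{\sigma}$, the image of $R^{*}$ lands in the invariants, which unwinds on the level of curves to the required compatibility $R=R^{\sigma}\circ f_{\sigma}$ for all $\sigma$; checking this last identity is a direct translation of the definitions of $F_{0}$ and $\phi_{\sigma}$. I expect the genuinely hard part to be the effectivity/continuity step, whereas the cocycle computation and the final compatibility are formal. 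It is worth noting that no Brauer-type obstruction enters the proof, since the $f_{\sigma}$ are supplied by hypothesis; the obstruction to the field of moduli being a field of definition, studied in the remainder of the paper, is rather the obstruction to the \emph{existence} of such a cocycle.
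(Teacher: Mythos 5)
The paper does not actually prove this statement: it is quoted verbatim from Weil's article \cite{Weil} and used as a black box, so there is no in-paper argument to compare yours against; your proposal must stand as a self-contained proof of the quoted result. Its formal layer is correct: setting $\phi_{\sigma}=f_{\sigma}^{*}\circ s_{\sigma}$, the naturality identity $s_{\tau}\circ f_{\sigma}^{*}=(f_{\sigma}^{\tau})^{*}\circ s_{\tau}$ and the computation $\phi_{\tau}\circ\phi_{\sigma}=\phi_{\tau\sigma}$ are exactly the translation of the cocycle hypothesis into a semilinear action on $k(\X)$, consistent with the paper's convention $\X^{\tau\sigma}=(\X^{\sigma})^{\tau}$.

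The genuine gap is the step you flag and then dispose of in one sentence: the reduction ``to a finite Galois subextension $k_{1}/k_{0}$ over which $\X$ and all the $f_{\sigma}$ are defined and through which the $\phi_{\sigma}$ factor.'' In this theorem $k$ is algebraically closed and $k_{0}$ is an arbitrary subfield, so $k/k_{0}$ is in general transcendental of infinite transcendence degree (e.g.\ $k=\C$ and $k_{0}$ a number field), and ${\rm Gal}(k/k_{0})$ is the full group of field automorphisms of $k$ fixing $k_{0}$; it is not a profinite Galois group, and $k$ is not exhausted by finite Galois subextensions of $k_{0}$. A priori $\X$ is only defined over a finitely generated --- typically transcendental --- extension of $k_{0}$, and nothing in the hypotheses says $\X$ descends to any finite algebraic extension of $k_{0}$: in characteristic zero that assertion is essentially Koizumi's theorem, which the paper cites as a separate nontrivial result, and it is also implied by the very conclusion you are proving (the model $\mathcal Y$ lives over $k_{0}$ itself). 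So the reduction assumes a statement of the same depth as the theorem rather than deriving it. Relatedly, Speiser's theorem $H^{1}({\rm Gal},GL_{n})=1$ concerns continuous cocycles of profinite groups acting on vector spaces and cannot be invoked for this ${\rm Gal}(k/k_{0})$. A complete proof must genuinely handle the transcendental part of $k/k_{0}$ --- this is where the content of Weil's theorem lies; Weil's own argument (and modern expositions, e.g.\ in Huggins's thesis) first descends along the finitely generated subextension carrying $\X$, exploiting the finiteness of ${\rm Aut}(\X)$ together with specialization/generic-point arguments, and only afterwards performs finite Galois descent, for which, incidentally, Artin's lemma on fixed fields of finite group actions suffices and no Hilbert 90 is needed for function fields. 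Your outline supplies only this final, formal layer.
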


\s

Clearly, the sufficient conditions in Weil's descent theorem are trivially satisfied if $\X$ has non-trivial automorphisms (a generic situation for $\X$ of genus at least three).

\s

\begin{cor}\label{coro:weil}
If $\X$ has trivial group of automorphisms and for every $\sigma \in {\rm Gal}(k/k_{0})$ there is an isomorphism $f_{\sigma}:\X \to \X^{\sigma}$, then $\X$ can be defined over $k_{0}$.
\end{cor}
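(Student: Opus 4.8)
The plan is to deduce this directly from Weil's descent theorem by showing that its cocycle hypothesis is automatically fulfilled as soon as $\Aut(\X)$ is trivial. So the only thing to verify is that any family $\{f_{\sigma}\}_{\sigma \in {\rm Gal}(k/k_{0})}$ of isomorphisms $f_{\sigma}:\X \to \X^{\sigma}$ necessarily satisfies $f_{\tau\sigma}=f_{\sigma}^{\tau}\circ f_{\tau}$ for all $\sigma,\tau \in {\rm Gal}(k/k_{0})$; once this is in hand, the theorem supplies the descent.

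First I would carefully track the sources and targets of the maps involved. Recalling that $\X^{\tau\sigma}=(\X^{\sigma})^{\tau}$, applying $\tau$ to the isomorphism $f_{\sigma}:\X \to \X^{\sigma}$ produces an isomorphism $f_{\sigma}^{\tau}:\X^{\tau} \to \X^{\tau\sigma}$, and composing it with $f_{\tau}:\X \to \X^{\tau}$ yields $f_{\sigma}^{\tau}\circ f_{\tau}:\X \to \X^{\tau\sigma}$. On the other hand $f_{\tau\sigma}:\X \to \X^{\tau\sigma}$ has exactly the same source and target. I would then form the composite $h:=f_{\tau\sigma}^{-1}\circ f_{\sigma}^{\tau}\circ f_{\tau}$, which is a well-defined isomorphism $\X \to \X$, hence an element of $\Aut(\X)$. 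Since by hypothesis $\Aut(\X)$ is trivial, $h$ is the identity, and therefore $f_{\sigma}^{\tau}\circ f_{\tau}=f_{\tau\sigma}$, which is precisely the required cocycle relation.

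With the hypothesis of Weil's descent theorem verified, I would invoke it to obtain a curve ${\mathcal Y}$ defined over $k_{0}$ together with an isomorphism $R:\X \to {\mathcal Y}$ (defined over a finite extension of $k_{0}$), which shows that $k_{0}$ is a field of definition of $\X$. I expect essentially no genuine obstacle here: the argument is a short deduction, and the only point that demands care is the bookkeeping of the twisting exponents, that is, distinguishing $\X^{\tau\sigma}$ from $\X^{\sigma\tau}$ so that the composite $h$ really is an endomorphism of $\X$ and not a morphism between two genuinely distinct twists.
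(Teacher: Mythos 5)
Your proof is correct and follows exactly the route the paper intends: the paper dispatches this corollary with a one-line remark that Weil's cocycle condition is trivially satisfied when $\Aut(\X)$ is trivial, and your argument simply spells out that remark, using the convention $\X^{\tau\sigma}=(\X^{\sigma})^{\tau}$ to see that $f_{\tau\sigma}^{-1}\circ f_{\sigma}^{\tau}\circ f_{\tau}\in\Aut(\X)$ must be the identity. Nothing is missing; your version is just a more careful write-up of the same deduction.
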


\s

\subsubsection{Field of moduli} \label{Sec:FOD}
The notion of field of moduli was originally introduced by Shimura for the case of abelian varieties and later extended to more general algebraic varieties by Koizumi.
If $G_{\X}$ is the subgroup of ${\rm Gal}(k)$ consisting of those $\sigma$ so that $\X^{\sigma}$ is isomorphic to $\X$, then the fixed field $M_{\X}$ of $G_{\X}$ is called {\it the field of moduli} of $\X$.

As we are assuming that $k$ is algebraically closed and of characteristic zero, we have that $G_{\X}$ consists of all automorphisms of ${\rm Gal}(k)$ acting as the identity on $M_{\X}$.

It is known that every curve of genus $g \leq 1$ can be defined over its field of moduli. If $g \geq 2$, then (as already said in the Introduction) there are known examples of curves which cannot be defined over their field of moduli.  A direct consequence of Corollary \ref{coro:weil} is the following.

\s

\begin{cor}\label{corotrivial}
Every curve with trivial group of automorphisms can be defined over its field of moduli.
\end{cor}

\s

As a consequence of Belyi's theorem \cite{Belyi}, every quasiplatonic curve $\X$ can be defined over $\overline{\mathbb Q}$ (so over a finite extension of ${\mathbb Q}$).

\s

\begin{thm}[Wolfart \cite{Wolfart}]\label{Wolfart}
Every quasiplatonic curve can be defined over its field of moduli (which is a number field).
\end{thm}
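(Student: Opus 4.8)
The plan is to prove the two assertions of the statement in turn: first that the field of moduli $M_{\X}$ is a number field, and then that $\X$ descends to it.

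For the number field claim I would start from the quasiplatonic hypothesis, which says precisely that the quotient map $\beta\colon \X \to \X/\Aut(\X) \iso \P^1$ is a Galois cover (with group $G=\Aut(\X)$) branched over exactly three points, i.e. a Belyi map. By Belyi's theorem \cite{Belyi} (as already recorded just before the statement) this forces $\X$ to be defined over $\overline{\Q}$, so $\p=[\X]$ is a $\overline{\Q}$-point of $\M_g$. I would then encode $\beta$ combinatorially: the curve is determined by the ramification type $(p,q,r)$ together with a generating triple of $G$ having product $1$ and the prescribed orders, taken up to the appropriate equivalence. For a fixed genus $g$ only finitely many such data occur, and the natural action of $\mathrm{Gal}(\overline{\Q}/\Q)$ on this finite set has finite orbits. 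Consequently $G_{\X}$ contains $\mathrm{Gal}(\C/\overline{\Q})$ with finite index in $\mathrm{Gal}(\C/\Q)$ equal to the orbit length, and its fixed field $M_{\X}$ is a finite extension of $\Q$.

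For definability I would aim to verify the hypotheses of Weil's descent theorem \cite{Weil} over $k_{0}=M_{\X}$. By the very definition of the field of moduli, for every $\sigma \in \mathrm{Gal}(k/M_{\X})$ we have $\X^{\sigma}\iso \X$, so some isomorphism $f_{\sigma}\colon \X \to \X^{\sigma}$ exists; the entire difficulty is to choose the $f_{\sigma}$ so that the cocycle relation $f_{\tau\sigma}=f_{\sigma}^{\tau}\circ f_{\tau}$ holds, each $f_{\sigma}$ being determined only up to composition with an element of $\Aut(\X)$. The feature I would exploit to pin them down is the rigidity intrinsic to the quasiplatonic case: the target $\P^1$ carries exactly three branch points, and three points can be normalized canonically to $0,1,\infty$ (uniquely when the three cone orders are distinct, otherwise up to the finite group permuting equal orders). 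Since $\{0,1,\infty\}$ is defined over $\Q\subset M_{\X}$ and the covering datum has no continuous moduli, the Belyi pair $(\X,\beta)$ admits a distinguished model whose field of moduli is already a field of definition; transporting this canonical normalization back to the bare curve selects the $f_{\sigma}$ coherently, so the cocycle condition is forced and Weil's theorem produces a curve $\mathcal Y$ over $M_{\X}$ isomorphic to $\X$.

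The main obstacle is exactly this cocycle condition, that is, the failure of the isomorphisms $f_{\sigma}$ to assemble compatibly when $\Aut(\X)\neq 1$; equivalently, showing that the $H^{2}$-type obstruction attached to Weil's criterion vanishes. My strategy for overcoming it is the reduction from $\X$ to the rigid marked object given by $\beta$ with its branch locus fixed at $0,1,\infty$, for which field of moduli and field of definition coincide by rigidity, and then descending that model. I would expect the bookkeeping caused by equal cone orders (the residual finite permutation ambiguity in the normalization) to be the fiddly point, but it only enlarges the defining data by a finite group defined over $\Q$ and does not disturb the rigidity that drives the argument.
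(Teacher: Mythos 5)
The paper does not prove this statement at all: it is imported verbatim from Wolfart \cite{Wolfart} as a known result (with Belyi's theorem \cite{Belyi} recalled just before it), so there is no internal proof to compare yours against, and your proposal has to stand on its own. Its first half does stand: Belyi's theorem puts $\X$ over $\overline{\Q}$, and the finiteness of quasiplatonic curves of a given genus (finitely many groups and generating triples with product one) gives a finite Galois orbit of the moduli point, hence a number field as field of moduli. That part is a correct, standard argument.

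The second half has a genuine gap, located exactly where the entire difficulty of Wolfart's theorem lives. You argue that after normalizing the three branch points to $\{0,1,\infty\}$, rigidity of the Belyi pair $(\X,\beta)$ ``forces'' Weil's cocycle condition. It does not. Normalizing the base kills only the M\"obius ambiguity downstairs: each isomorphism $f_{\sigma}\colon \X\to\X^{\sigma}$ then covers the identity of $\P^{1}$ (or a permutation of $\{0,1,\infty\}$ when cone orders coincide), but it remains ambiguous up to composition with the deck group of $\beta$, which is all of $\Aut(\X)$ --- and for a quasiplatonic curve this group is as large as possible. Rigidity, i.e.\ the absence of continuous moduli, is irrelevant to this ambiguity: the obstruction to choosing the $f_{\sigma}$ coherently is a non-abelian cocycle obstruction created by automorphisms, not by moduli, and ``rigid object $\Rightarrow$ field of moduli is a field of definition'' is not a valid principle. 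If the mere existence of the $f_{\sigma}$ plus some canonical auxiliary structure sufficed, the counterexamples of Earle, Shimura and Huggins discussed in this very paper could not exist, since there too the isomorphisms $f_\sigma$ exist and the hyperelliptic (or cyclic) cover is canonical. Your sentence ``the Belyi pair admits a distinguished model whose field of moduli is already a field of definition'' is precisely the statement to be proved; it is asserted, not established. Closing the gap requires real input: Wolfart's own proof works through triangle-group uniformization, and an alternative route is the D\`ebes--Emsalem criterion --- the canonical model $B$ of $\X/\Aut(\X)$ over the field of moduli $M$ is a genus-zero curve carrying the Galois-stable branch divisor of degree three, an $M$-rational divisor of odd degree, so $B\cong\P^{1}_{M}$ and has $M$-points off the branch locus, which is what actually rigidifies the descent (this same mechanism underlies the odd-signature theorem of \cite{AQ} used elsewhere in the paper). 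You must cite or reprove a result of this kind; ``by rigidity'' is not an argument.
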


\s

\subsection{Two practical sufficient conditions}
When the curve $\X$ has a non-trivial group of automorphisms, then Weil's conditions (in Weil's descent theorem) are in general not easy to check. Next we consider certain cases for which it is possible to check for $\X$ to be definable over its field of moduli.

\subsubsection{Sufficient condition 1: unique subgroups}
Let $H$ be a subgroup of $\Aut(\X)$. In general it might be another different subgroup $K$ which is isomorphic to $H$ and with $\X/K$ and $\X/H$ having the same signature. For instance, the genus two curve $\X$ defined by $y^{2}=x(x-1/2)(x-2)(x-1/3)(x-3)$ has two conformal involutions, $\tau_{1}$ and $\tau_{2}$, whose product is the hyperelliptic involution. The quotient $\X/\langle \tau_{j}\rangle$ has genus one and exactly two cone points (of order two). 

We say that $H$ is 
is {\it unique} in $\Aut(\X)$ if it is the unique subgroup of $\Aut(\X)$ isomorphic to $H$ and with quotient orbifold of same signature as  $\X/H$. Typical examples are (i) $H=\Aut(\X)$ and (ii) $H$ being the cyclic group generated by the hyperelliptic involution for the case of hyperelliptic curves. 

If $H$ is unique in $\Aut(\X)$, then it is a normal subgroup; so we may consider the reduced group $\bAut(\X)=\Aut(\X)/H$, which is a group of automorphisms of the quotient orbifold $\X/H$. In \cite{HQ} the following sufficient condition for a curve to definable over its field of moduli was obtained.

\s

\begin{thm}[Hidalgo and Quispe \cite{HQ}]\label{thm1}
Let $\X$ be a curve of genus $g \geq 2$ admitting a subgroup $H$ which is unique in $\Aut(\X)$ and so that $\X/H$ has genus zero.  If the reduced group of automorphisms $\bAut(\X)=\Aut(\X)/H$ is different from trivial or cyclic, then $\X$ is definable over its field of moduli.
\end{thm}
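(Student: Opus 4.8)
The plan is to verify, for $M := M_{\X}$, the hypotheses needed to descend $\X$ to $M$, reducing everything to the existence of an $M$-rational point on the quotient of $\X$ by its full automorphism group; the non-cyclicity of $\G := \bAut(\X)$ is exactly what produces this point. Since ${\rm Gal}(k/M) = G_{\X}$, for each $\sigma$ there is an isomorphism $h_\sigma : \X \to \X^\sigma$. Because $H$ is unique in $\Aut(\X)$, the subgroup $h_\sigma H h_\sigma^{-1}$ is the corresponding unique subgroup of $\Aut(\X^\sigma)$, so each $h_\sigma$ descends to an isomorphism of the genus-zero quotients $\X/H \to \X^\sigma/H^\sigma$ and conjugates $\G \le \mathrm{PGL}_2(k)$ onto $\bAut(\X^\sigma)$. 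Consequently the whole tower $\X \to \X/H \to (\X/H)/\G = \X/\Aut(\X)$ is canonically attached to $\X$; writing $\mathcal B := \X/\Aut(\X)$, which is of genus zero since it is dominated by $\X/H \cong \P^1$, the marked curve $(\mathcal B, \text{branch divisor of } \X \to \mathcal B)$ has field of moduli contained in $M$ and, being of genus zero, descends to a conic $\mathcal B_0$ over $M$ carrying its branch divisor over $M$.

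First I would invoke the descent criterion of D\`ebes and Emsalem, a sharpening of Weil's descent theorem in which a rational point is used to rigidify the base: it guarantees that $\X$ is defined over $M$ as soon as $\mathcal B_0$ carries an $M$-rational point lying off the branch locus. The heart of the proof is then to produce such a point, and this is where the hypothesis on $\G$ enters.

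Since $\G$ is a non-trivial, non-cyclic finite subgroup of $\mathrm{PGL}_2(k)$, it is conjugate to a dihedral group or to one of $A_4, S_4, A_5$, and the corresponding quotient $\X/H \to \mathcal B$ has exactly three branch points: of orders $(2,2,n)$ in the dihedral case of order $2n$, and $(2,3,3), (2,3,4), (2,3,5)$ for $A_4, S_4, A_5$ respectively. In every case except the Klein four-group $V_4 = D_4$ (signature $(2,2,2)$), one of the three orders differs from the others, so the corresponding branch point is canonically distinguished, hence fixed by ${\rm Gal}(k/M)$ and thus $M$-rational; this already yields a point of $\mathcal B_0(M)$. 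In the remaining case $(2,2,2)$ the three branch points are permuted by ${\rm Gal}(k/M)$, so their sum is an $M$-rational $0$-cycle of degree three; since it contains a closed point of odd degree, Springer's theorem gives a point of $\mathcal B_0(M)$. Either way $\mathcal B_0 \cong \P^1_M$, and as the branch locus is finite we may choose an $M$-rational point off it.

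I expect the main obstacle to lie not in finding the point but in the final descent step: after using it to rigidify the three-point base $\mathcal B_0$, one must check that the a priori non-abelian Weil cocycle $c_{\sigma,\tau} = h_{\tau\sigma}^{-1} \circ h_\sigma^\tau \circ h_\tau \in \Aut(\X)$ can be trivialized, equivalently that the residual obstruction in $H^2({\rm Gal}(k/M), Z(\Aut(\X)))$ vanishes; this is precisely what the D\`ebes--Emsalem machinery delivers, and it is the step I would need to carry out with care. It is worth noting where non-cyclicity is indispensable: a cyclic $\G \cong C_n$ gives a quotient of signature $(n,n)$ with only two branch points, whose sum is a degree-two cycle that may be an inert conjugate pair, leaving $\mathcal B_0$ a pointless conic --- exactly the mechanism behind Huggins' hyperelliptic counterexamples.
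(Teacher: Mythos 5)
The paper itself gives no proof of this theorem (it is quoted directly from Hidalgo and Quispe \cite{HQ}), and your argument reconstructs essentially the proof from that source: uniqueness of $H$ makes the tower $\X \to \X/H \to \X/\Aut(\X)$ Galois-equivariant, the three cone points of the $\G$-action on $\X/H$ then form a Galois-stable set on the D\`ebes--Emsalem canonical model $\mathcal{B}_0$ of $\X/\Aut(\X)$ over the field of moduli, an $M$-rational point is produced either from a cone point of distinguished order or, in the signature $(2,2,2)$ case, from the degree-three Galois-stable cycle via Springer's theorem, and the D\`ebes--Emsalem criterion (a rational point off the branch locus, available since $\mathcal{B}_0 \cong \P^1_M$ and $M$ is infinite) finishes the descent. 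So your proposal is correct and takes essentially the same route as the cited proof; there is nothing of substance to correct.
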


\s

If $\X$ is a hyperelliptic curve, then a consequence of the above is the following result (originally due to Huggins \cite{Hu2}).

\s

\begin{cor}\label{cor1}
Let $\X$ be a hyperelliptic curve with extra automorphisms and reduced automorphism group $\bAut (\X)$ not isomorphic to a cyclic group.  Then, the field of moduli of $\X$  is a field of definition. 
\end{cor}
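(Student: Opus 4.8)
The plan is to deduce this directly from Theorem \ref{thm1} by taking $H=\langle \iota\rangle$, where $\iota$ denotes the hyperelliptic involution of $\X$. First I would recall the classical structure of hyperelliptic curves: for $g\geq 2$ the degree-two map $\X\to\P^1$ is unique (the linear system $g^1_2$ is unique), hence the involution $\iota$ inducing it is uniquely determined, it is central in $\Aut(\X)$, and the quotient $\X/\langle\iota\rangle$ has genus zero with exactly $2g+2$ cone points of order two (the Weierstrass points). Thus $H=\langle\iota\rangle\iso C_2$ and $\X/H$ has genus zero, which is one of the two hypotheses of Theorem \ref{thm1}.

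Next I would verify that $H$ is \emph{unique} in $\Aut(\X)$ in the sense defined above, i.e.\ that no other subgroup $K\iso C_2$ yields a quotient orbifold with the same signature as $\X/H$. A Riemann--Hurwitz computation shows that any involution $\sigma$ with $\X/\langle\sigma\rangle$ of genus zero must have exactly $2g+2$ fixed points, so $\langle\sigma\rangle$ has the same signature as $H$; but an involution with $2g+2$ fixed points necessarily induces a degree-two map to $\P^1$, which by uniqueness of the $g^1_2$ coincides with the hyperelliptic map, forcing $\sigma=\iota$. Hence $H$ is unique, and in particular normal, so the reduced group $\bAut(\X)=\Aut(\X)/H$ is well defined.

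It then remains to check the hypothesis on $\bAut(\X)$. The assumption that $\X$ has extra automorphisms means $|\Aut(\X)|>2=|H|$, so $\bAut(\X)$ is non-trivial; together with the standing hypothesis that $\bAut(\X)$ is not cyclic, we conclude that $\bAut(\X)$ is neither trivial nor cyclic. All hypotheses of Theorem \ref{thm1} are now met, and applying it yields that $\X$ is definable over its field of moduli. I expect the only delicate point to be the uniqueness of $H$; however, since the hyperelliptic involution is classically known to be the unique central involution with genus-zero quotient for $g\geq 2$, this step is routine, and the corollary follows immediately.
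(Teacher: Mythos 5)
Your proof is correct and follows exactly the paper's route: the paper derives Corollary~\ref{cor1} as an immediate consequence of Theorem~\ref{thm1}, taking $H$ to be the group generated by the hyperelliptic involution, which it has already flagged as a typical example of a unique subgroup with genus-zero quotient. Your verification of the uniqueness of $H$ via Riemann--Hurwitz and the uniqueness of the $g^1_2$ simply spells out the classical fact the paper takes for granted.
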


\s

\subsubsection{Sufficient condition 2: Odd signature}
Another sufficient condition of a curve $\X$ to be definable over its field of moduli, which in particular contains the case of quasiplatonic curves, was provided in \cite{AQ}. We say that $\X$ has {\it odd signature} if $\X/{\rm Aut}(\X)$ has genus zero and in its signature one of the cone orders appears an odd number of times.

\s

\begin{thm}[Artebani and Quispe \cite{AQ}\label{thm2}]
Let $\X$ be a curve of genus $g \geq 2$. If $\X$ has odd signature, then it can be defined over its field of moduli.
\end{thm}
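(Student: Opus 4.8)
The plan is to reduce the statement, by way of Weil's descent theorem, to a purely arithmetic question about a conic, and then to let the odd signature settle that question. Write $G=\Aut(\X)$ and $M=M_{\X}$, and let $\pi\colon \X\to B:=\X/G$ be the quotient map; by the odd-signature hypothesis $B$ has genus zero, so $B$ is a smooth conic over $k$. For each $\sigma\in{\rm Gal}(k/M)$ there is an isomorphism $f_{\sigma}\colon\X\to\X^{\sigma}$ (these exist precisely because $\sigma$ fixes the field of moduli), and each $f_{\sigma}$ induces an isomorphism $\bar f_{\sigma}\colon B\to B^{\sigma}$ of the quotient conics. The structural input I would invoke here is the canonical-model construction of D\`ebes--Emsalem: the compatible family $\{\bar f_{\sigma}\}$ descends $B$ to a conic $B_{0}$ defined over $M$, carrying its branch data, and the reduction says that $\X$ is definable over $M$ as soon as $B_{0}$ admits an $M$-rational point (equivalently $B_{0}\cong\P^{1}_{M}$), since such a point rigidifies the base and lets one assemble the $f_{\sigma}$ into a genuine Weil cocycle on $\X$.

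The next step is to see what descends to $B_{0}$. Any isomorphism of curves sends cone points to cone points of the same order, so the ${\rm Gal}(k/M)$-action permutes the branch points of $\pi$ while preserving their orders. Hence the branch divisor splits into pieces $D_{m}=\sum_{\mathrm{ord}(p_{j})=m}p_{j}$, one for each cone order $m$ in the signature, and each $D_{m}$ is individually ${\rm Gal}(k/M)$-invariant and therefore an $M$-rational divisor on $B_{0}$.

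The arithmetic heart is where the hypothesis is used. By odd signature some cone order $m$ occurs an odd number of times, so the $M$-rational divisor $D_{m}$ on $B_{0}$ has odd degree. On the other hand every smooth conic carries an $M$-rational divisor class of degree $2$ (its anticanonical class, or the hyperplane class of its plane embedding). Since $\gcd(2,\deg D_{m})=1$, a suitable integral combination of these two classes is an $M$-rational divisor class of degree $1$; on a genus-zero curve such a class is effective with its complete linear system defined over $M$, which forces an $M$-rational point. Thus $B_{0}\cong\P^{1}_{M}$, and feeding this back into the reduction of the first paragraph shows that $\X$ is defined over $M=M_{\X}$.

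I expect the main obstacle to lie in the first step rather than the last: setting up the descent of $B_{0}$ together with its branch divisor over the field of moduli, and verifying that a rational point on $B_{0}$ is exactly what upgrades the family $\{f_{\sigma}\}$ to a Weil cocycle on $\X$, is where the full canonical-model formalism is needed. By contrast the odd-signature hypothesis contributes only the short degree-parity argument above, whose one point of care is confirming that the $\bar f_{\sigma}$ respect cone orders so that each $D_{m}$ is genuinely $M$-rational.
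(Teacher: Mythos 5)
Your proposal is correct and takes essentially the same route as the proof behind this statement: the paper itself states the theorem without proof, citing Artebani--Quispe \cite{AQ}, and their argument is precisely your reduction via the D\`ebes--Emsalem canonical model $B_{0}$ of $\X/\Aut(\X)$ over the field of moduli $M$, followed by the parity step (an odd-degree $M$-rational branch divisor combined with the degree-$2$ anticanonical class yields a degree-$1$ rational divisor, hence an $M$-point and $B_{0}\cong\P^{1}_{M}$), after which the canonical-model criterion gives the descent of $\X$. The only detail worth flagging is that the D\`ebes--Emsalem criterion asks for a rational point \emph{outside} the branch locus, but this is automatic in your conclusion since $\P^{1}_{M}$ has infinitely many $M$-points and the branch locus is finite.
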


\s

\subsection{The locus of curves with prescribed group action, moduli dimension of families}
Fix an integer $g\ge2$ and a finite group $G$. Let $C_1, \dots , C_r$ be nontrivial conjugacy classes of $G$. Let $\bC=(C_1, \dots ,C_r)$, viewed as an unordered tuple, where repetitions are allowed. We allow $r$ to be zero, in which case $\bC$ is empty. Consider pairs $(\X, \mu)$, where $\X$ is a curve and $\mu: G  \to  \Aut(\X)$ is an injective
homomorphism. We will suppress $\mu$ and just say $\X$ is a curve with $G$-action, or a $G$-curve. Two $G$-curves $\X$ and $\X'$ are called equivalent if there is a $G$-equivariant  conformal isomorphism $\X\to \X'$.

We say a $G$-curve $\X$ is \textbf{of ramification type} $(g, G, \bC)$ (for short, of type $(g,G,\bC)$)
if 
\begin{enumerate}
\item[i)] $g$ is the genus of $\X$, 
\item[ii)] $G<{\rm Aut}(\X)$,
\item[iii)]  the points of the quotient $\X/G$ that are ramified in the cover $\X\to \X/G$ can be labeled as $p_1, \dots ,p_r$ such that $C_i$ is the conjugacy class in $G$ of distinguished inertia group
generators over $p_i$ (for $i=1, \dots ,r$). 
\end{enumerate}

If $\X$ is a $G$-curve of type $(g,G,\bC)$, then the genus $g_0$ of $\X/G$ is given by the Riemann-Hurwitz formula
$$2(g-1)=2|G|(g_{0}-1)+|G|\sum_{j=1}^{r}(1-|C_{j}|^{-1}).$$

Define $\H=\H(g,G,\bC)$ to be the set of equivalence classes of $G$-curves of type $(g,G,\bC)$. By covering space theory, $\H$ is non-empty if and only if $G$ can be generated by elements $\a_1,\b_1,...,\a_{g_0},\b_{g_0},\g_1,...,\g_r$ with $\g_i\in C_i$ and $\prod_{j}\ [\a_j,\b_j]\  \prod_{i} \g_i =  1$, where $[\a,\b]=\ \a^{-1}\b^{-1}\a\b$.

Let $\M_g$ be the moduli space of genus $g$ curves, and $\M_{g_0,r}$ the moduli space of genus $g_0$ curves with $r$ distinct marked points, where we view the marked points as unordered. Consider the map 
\[ \Phi:\ \H\ \to \ \M_{g},\]
forgetting the $G$-action, and the map $\Psi:\ \H \ \to \ \M_{g_0,r} $ mapping (the class of) a $G$-curve $\X$ to the class of the quotient curve $\X/G$ together with the (unordered) set of branch points $p_1, \dots , p_r$. 

If $\H \ne \emptyset$, then $\Psi$ is surjective and has finite fibers, by covering space theory. Also $\Phi$ has finite fibers, since the automorphism group of a
curve of genus $\ge2$ is finite. The set $\H$ carries a structure of quasi-projective variety (over $\C$) such that the maps $\Phi$ and $\Psi$ are finite morphisms. 
If $\H\ne\emptyset$, then all (irreducible) components of $\H$ map surjectively to $\M_{g_0,r}$ (through a finite map), hence they all have the same dimension
\[ \d     (g,G,\bC):= \ \ \dim\ \M_{g_0,r} \ \ = \ \ 3g_0-3+r\]  
Let $\M(g,G,\bC)$ denote the image of $\Phi$, i.e., the locus of genus $g$ curves admitting a $G$-action of type $(g,G,\bC)$. 
 Since $\Phi$ is a finite map, if this locus is non-empty, each of its (irreducible) components has dimension $\d (g, G,\bC)$.

Theorem \ref{teoAQ} can be written as follows.

\s

\begin{thm}
If $\d (g, G,\bC) = 0$, then every curve in $\M (g, G, \bC) $ is defined over its field of moduli.
\end{thm}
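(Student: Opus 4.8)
The plan is to show that the numerical hypothesis $\d(g,G,\bC)=0$ forces every $\X\in\M(g,G,\bC)$ to be quasiplatonic, and then to invoke Wolfart's theorem (Theorem~\ref{Wolfart}). First I would unravel the condition $\d(g,G,\bC)=3g_0-3+r=0$. Over the non-negative integers the equation $3g_0+r=3$ has only the solutions $(g_0,r)=(0,3)$ and $(g_0,r)=(1,0)$. The Riemann--Hurwitz relation $2(g-1)=2|G|(g_0-1)+|G|\sum_{j=1}^{r}(1-|C_j|^{-1})$ eliminates the second: for $(g_0,r)=(1,0)$ it reads $2(g-1)=0$, i.e.\ $g=1$, against $g\ge 2$. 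Hence $g_0=0$ and $r=3$, so $\X/G$ is a sphere carrying exactly three cone points.

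Next I would promote this from the prescribed group $G$ to the full group $G'=\Aut(\X)\supseteq G$. Let $\bC'$ be the ramification type of the $G'$-action and $(g_0',r')$ the signature of $\X/G'$. Every curve whose full automorphism group acts with type $\bC'$ also carries the induced $G$-action, which along the equisymmetric stratum through $\X$ has the constant type $\bC$; thus the stratum of $\M(g,G',\bC')$ through $[\X]$ is contained in $\M(g,G,\bC)$, giving $\d(g,G',\bC')\le\d(g,G,\bC)=0$. Conversely $\M(g,G',\bC')\ne\emptyset$ (it contains $[\X]$), so $\d(g,G',\bC')=\dim\M(g,G',\bC')\ge 0$. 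Therefore $3g_0'-3+r'=0$ and, by the computation of the first step, $(g_0',r')=(0,3)$. Consequently $\X/\Aut(\X)$ is a sphere with exactly three cone points, which is precisely the definition of a quasiplatonic curve.

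Having identified $\X$ as quasiplatonic, Theorem~\ref{Wolfart} finishes the argument: every quasiplatonic curve is defined over its field of moduli. Since $\X\in\M(g,G,\bC)$ was arbitrary, the whole locus $\M(g,G,\bC)$ consists of curves defined over their fields of moduli. (One could equally finish through the odd--signature criterion, Theorem~\ref{thm2}, since a three--point signature $(n_1,n_2,n_3)$ always has some cone order occurring an odd number of times.)

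I expect the main obstacle to be the middle step, namely transferring the three--cone--point conclusion from $G$ to $\Aut(\X)$, because the paper's notion of quasiplatonic is phrased in terms of the full automorphism group. The cleanest route is the dimension sandwich $0\le\d(g,\Aut(\X),\bC')\le\d(g,G,\bC)=0$ used above; alternatively one can argue with Fuchsian groups, writing $\X=\mathbb{H}/\Gamma$ with $\Gamma\normal\Delta$ for the triangle group $\Delta$ uniformizing $\X/G$, and invoking the classical fact that a Fuchsian group containing a cocompact triangle group with finite index is again a triangle group, so that $\Aut(\X)=N_{\mathrm{PSL}_2(\mathbb{R})}(\Gamma)/\Gamma$ has a three--point--sphere quotient.
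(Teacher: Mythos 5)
Your proposal is correct and takes essentially the paper's own route: the paper disposes of this theorem in one line, observing that $\delta=0$ forces $\X/G$ to be of genus zero with exactly three cone points, so that $\X$ is quasiplatonic and Wolfart's theorem (Theorem \ref{Wolfart}) applies. The one thing you add is the careful transfer of the three-cone-point property from the prescribed group $G$ to the full group $\Aut(\X)$ (via your dimension sandwich, or equivalently Singerman's theorem that a Fuchsian group containing a cocompact triangle group of finite index is itself a triangle group); the paper silently elides this step even though its definition of quasiplatonic refers to $\X/\Aut(\X)$, so your version is a more complete rendering of the same argument.
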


\s

The last part of the above is due to the fact that $\delta=0$ ensures that the quotient orbifold $\X/G$ must be of genus zero and with exactly three conical points, that is, $\X$ is a quasiplatonic curve.

\section{Field of moduli of superelliptic curves}\label{Sec:FOMSC}
\subsection{Automorphism groups of superelliptic curves}
Let $\X$ be a superelliptic curve of level $n$ with $G={\rm Aut}(\X)$. By the definition, there is some $\tau \in G$, of order $n$ and central, so that the quotient $\X / \< \tau \>$ has genus zero, that is, it can be identified with the projective line, and all its cone points have order $n$.  As, in this case, the cyclic group $H=\< \tau \> \cong C_{n}$  is normal subgroup of $G$, we may consider the quotient group $\G \, := \, G/H$, called the \textit{reduced automorphism group of $\X$ with respect to $H$}; so  $G$ is a degree $n$ central extension of $\G$.

\s

In the particular case that $n=p$ is a prime integer, Castelnuovo-Severi's inequality \cite{CS} asserts that for $g>(p-1)^{2}$ the cyclic group $H$ is unique in $\Aut(\X)$. 
In \cite{Hid-pgrupo} it is observed that if $n=p \geq 5(r-1)$ is prime, where $r \geq 3$ denotes the number of cone points of the quotient $\X/H$, then $H$ is again unique. The following result shows that the superelliptic group of level $n$ is unique.

\s

\begin{prop}\label{unicidad}
A superelliptic curve of level $n$ and genus $g \geq 2$ has a unique superelliptic group of level $n$.
\end{prop}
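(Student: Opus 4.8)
The plan is to start from a second superelliptic group $H'=\<\tau'\>$ of level $n$ and to exploit that $\tau$ and $\tau'$ are both central, hence commute. Because $\tau'$ commutes with $\tau$, it descends to an automorphism $\bar\tau'$ of the quotient $\X/H\iso\P^1$, that is, $\bar\tau'\in\bAut(\X)=\Aut(\X)/H$. The argument then splits according to whether $\bar\tau'$ is trivial. If $\bar\tau'=\mathrm{id}$, then $\tau'$ preserves every fibre of the Galois cover $\X\to\X/H$ and so is a deck transformation; thus $\tau'\in H$, and since $\tau'$ has order $n=|H|$ we conclude $H'=H$. All the difficulty is therefore in ruling out the case $\bar\tau'\neq\mathrm{id}$.

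Here centrality is the decisive tool. Since $\tau'$ is central, its fixed locus $\mathrm{Fix}(\tau')\subset\X$ is invariant under all of $\Aut(\X)$; and because $\X/H'$ has only cone points of order $n$, this locus consists of exactly the $r=2+2g/(n-1)$ totally ramified points. On the other hand a nontrivial $\bar\tau'$ has precisely two fixed points on $\P^1=\X/H$, and $\mathrm{Fix}(\tau')$ must lie in the two fibres above them. A fibre above a branch point is a single point, while a fibre above a regular point is a free $H$-orbit on which $\tau'$ (commuting with the $n$-cycle $\tau$) acts as a power of $\tau$, contributing either $0$ or $n$ fixed points. Counting fibre by fibre gives $r\in\{0,1,2,n,n+1,2n\}$; the bound $r\ge 3$ coming from $g\ge 2$, together with the superelliptic constraint (either $n\mid s$ or $\gcd(n,s)=1$, where $s\in\{r,r-1\}$), which excludes $r=n+1$, leaves only $r=n$ and $r=2n$.

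Finally I would eliminate these two residual values by identifying the curve each would force and producing in it an automorphism incompatible with centrality. For $r=n$ the locus $\mathrm{Fix}(\tau')$ sits over a single $\bAut(\X)$-fixed point of $\P^1$, so $\bAut(\X)$ fixes a point and is cyclic, forcing $\Aut(\X)$ to be abelian; analysing $\X\to\X/\<\tau,\tau'\>$ then shows $\<\tau,\tau'\>\iso C_n\times C_n$ and realises $\X$ as the degree-$n$ Fermat curve, whose automorphism group is non-abelian and permutes the three coordinate $C_n$'s, so that none of them (in particular $H$) is central -- contradicting either the abelianness just derived or the centrality of $\tau$. For $r=2n$ the same analysis presents $\X$ as a fibre product of two cyclic covers branched at two points each, so that the $2n$ branch points on $\X/H$ split into two regular $n$-gons; these always admit an extra involution interchanging the two $n$-gons, which lifts to $\X$ and conjugates $\bar\tau'$ to $(\bar\tau')^{-1}$, so $\bar\tau'$ is not central in $\bAut(\X)$ and hence $\tau'$ is not central in $\Aut(\X)$ -- again a contradiction. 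I expect this last step -- pinning down the exceptional curves attached to $r=n$ and $r=2n$ and extracting from each a symmetry that breaks centrality -- to be the crux; everything preceding it is bookkeeping with Riemann--Hurwitz and fixed-point counts.
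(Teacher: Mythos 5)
Your strategy is essentially the paper's: it too reduces to two commuting order-$n$ automorphisms generating $C_n^2$, counts fixed points via the rotation that $\tau'$ induces on $\P^1=\X/H$, and is left with exactly your two residual configurations --- one free orbit of branch points (the Fermat curve $y^n=x^n-1$, your $r=n$) and two free orbits (the curve $y^n=(x^n-1)(x^n-a^n)$, your $r=2n$) --- each killed by a symmetry incompatible with centrality: for the two-gon curve the paper exhibits the swap explicitly, $\gamma(x,y)=(a/x,\,ay/x^2)$, and for the Fermat curve it quotes $\Aut \X \cong C_n^2\rtimes S_3$. Your abelianness shortcut in the case $r=n$ and your fiber-product description in the case $r=2n$ are fine variants; the one caveat in the latter is that you assert rather than verify that the gon-swapping involution lifts to $\X$. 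Whether a M\"obius transformation lifts through a cyclic cover depends on the branch exponents, so this needs a check --- though here the lift does exist, either because it lifts to each factor of your fiber product, or by writing it down as the paper does.

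The genuine gap is your elimination of $r=n+1$. The ``superelliptic constraint'' you invoke presupposes that $\X$ has a model $y^n=f(x)$ with $f$ squarefree, so that the branch points number either $\deg f$ (when $n\mid\deg f$) or $\deg f+1$ (when $\gcd(n,\deg f)=1$). That normalization is how the paper's introduction displays these curves, but it does not follow from the definition the proposition is about (a central $\tau$ with $\X/\langle\tau\rangle$ of genus zero and all cone points of order $n$), and it is false in general: such a cover has equation $y^n=\prod_i(x-a_i)^{c_i}$ with each $c_i$ a unit mod $n$ and $\sum_i c_i\equiv 0\pmod n$, and the $c_i$ cannot always be normalized to $1$. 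For instance $y^5=(x-a_1)\cdots(x-a_4)(x-a_5)^3(x-a_6)^3$ has six cone points of order $5$ and, for generic $a_i$, automorphism group $C_5$, so it is superelliptic, yet it admits no squarefree model; in particular the gcd bookkeeping you cite cannot carry the exclusion. The case does die, but by the geometric observation that is precisely the paper's opening move: $r=n+1$ forces one of the two distinguished fibers to be a singleton, i.e.\ a totally ramified point of $\pi$, which is then fixed both by all of $H$ and by $\tau'$; since the stabilizer of a point of a curve in characteristic zero is cyclic, and a cyclic group contains a unique subgroup of order $n$, this gives $H=H'$, hence $\bar\tau'=\mathrm{id}$, contradicting your case hypothesis. (This ``no common fixed point'' step is how the paper guarantees from the outset that only the counts $n$ and $2n$ occur. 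Alternatively, your arithmetic instinct can be salvaged: $\tau'$-equivariance forces the exponents to be constant, say $c$, on the free orbit, and then the total monodromy $c_0+nc\equiv c_0\not\equiv 0\pmod n$ cannot vanish, contradicting that these $n+1$ points are all of the branch points.)
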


\begin{proof}
Let $\X$ be a superelliptic curve of level $n$ and assume that $\langle \tau \rangle$ and $\langle \eta \rangle$ are two different superelliptic groups of level $n$.
The condition that the cone points of both quotient orbifolds $\X/\langle \tau \rangle$ and $\X/\langle \eta \rangle$ are of order $n$ asserts that a fixed point of a non-trivial power of $\tau$ (respectively, of $\eta$) must also be a fixed point of $\tau$ (respectively, $\eta$). In this way, our previous assumption asserts that no non-trivial power of $\eta$ has a common fixed point with a non-trivial power of $\tau$. In this case, the fact that $\tau$ and $\eta$ are central asserts that $\eta \tau=\tau \eta$ and that $\langle \tau, \eta \rangle \cong C_{n}^{2}$ (see also \cite{Sa}). Let $\pi:\X \to {\mathbb P}^{1}_{k}$ be a regular branched cover with $\langle \tau \rangle$ as deck group. Then the automorphism $\eta$ induces a automorphism $\rho \in {\rm PGL}_{2}(k)$ (also of order $n$) so that $\pi \eta = \rho \pi$. As $\rho$ is conjugated to a rotation $x \mapsto \omega_{n} x$, where $\omega_{n}^{n}=1$, we observe that it has exactly two fixed points. This asserts that $\eta$ must have either $n$ or $2n$ fixed points (forming two orbits under the action of $\langle \tau \rangle$). As this is also true by interchanging the roles of $\tau$ and $\eta$, the same holds for the fixed points of $\tau$. It follows that the cone points of $\pi$ consists of (i) exactly two sets of cardinality $n$ each one or (ii) exactly one set of cardinality $n$, and each one being invariant under the rotation $\rho$. Up to post-composition by a suitable transformation in ${\rm PGL}_{2}(k)$, we may assume these in case (i) the $2n$ cone points are given by the $n$ roots of unity and the $n$ roots of unity of a point different from $1$ and $0$ and in case (ii) that the $n$ cone points are the $n$ roots of unity. In other words, $\X$ can be given either as
$$\X_{1}: \; y^{n}=(x^{n}-1)(x^{n}-a^{n}), \quad a \in k-\{0,1\}$$
or as the classical Fermat curve
$$\X_{2}: \;y^{n}=x^{n}-1$$
and, in these models, 
\[ \tau(x,y)=(x,\omega_{n} y), \quad \eta(x,y)=(\omega_{n} x, y).\]

As the genus of $\X_{1}$ is at least two, we must have that $n \geq 3$. But such a curve also admits the order two automorphism
\[ \gamma(x,y)=  \left(\frac{a}{x},\frac{ay}{x^{2}}  \right)\]
which does not commute with $\eta$, a contradiction to the fact that $\eta$ was assumed to be central.

In the Fermat case, the full group of automorphisms is $C_{n}^{2} \rtimes S_{3}$ and it may be checked that it is not superelliptic.

\end{proof}

\s
\subsection{Most of superelliptic curves are definable over their field of moduli}
The group  $\G$ is a subgroup of the group of automorphisms of a genus zero field, so $\G <  PGL_2(k)$ and $\G$ is finite. It is a classical result that every finite subgroup of $PGL_2 (k)$ (since we are assuming $k$ of characteristic zero) is either the trivial group or isomorphic to one of the following: $C_m $, $ D_m $, $A_4$, $S_4$, $A_5$. All automorphisms groups of superelliptic curves and their equations were determined in \cite{Sa} and \cite{Sa-sh}. 
Determining the automorphism groups $G$, the signature $\bC$ of the covering $\X \to \X/G$, and the dimension of the locus $\M(g,G,\bC)$  for superelliptic curves is known (see, for instance, \cite{Sa}). 

We have seen in Theorem \ref{unicidad} that its superelliptic group of level $n$ is unique.
As a consequence of Theorem \ref{thm1}, we obtain the following fact concerning the field of moduli of superelliptic curves. 

\begin{thm}\label{teounico}
Let $\X$ be a superelliptic curve of genus $g \geq 2$ with superelliptic group $H \cong C_{n}$. If the reduced group of automorphisms $\bAut(\X)=\Aut(\X)/H$ is different from trivial or cyclic, then $\X$ is definable over its field of moduli.
\end{thm}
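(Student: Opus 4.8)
The plan is to obtain the statement as a direct application of Theorem~\ref{thm1} to the pair $(\X, H)$, where $H=\langle\tau\rangle\cong C_n$ is the group generated by the superelliptic automorphism. That theorem has three hypotheses: that $H$ be unique in $\Aut(\X)$, that $\X/H$ have genus zero, and that $\bAut(\X)=\Aut(\X)/H$ be neither trivial nor cyclic. The last is exactly the standing assumption of the present statement, and the second is immediate, since by definition a superelliptic curve of level $n$ has $\X/\langle\tau\rangle$ of genus zero with all cone points of order $n$. Thus the whole content of the proof is to verify the uniqueness hypothesis, after which Theorem~\ref{thm1} applies verbatim.

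Here one must be careful, because two a priori distinct notions of \emph{unique} are in play. Proposition~\ref{unicidad} establishes that $H$ is the unique \emph{superelliptic} group of level $n$, that is, the unique central cyclic subgroup of order $n$ whose quotient has genus zero with all cone points of order $n$. Theorem~\ref{thm1}, by contrast, demands that $H$ be the only subgroup $K\cong C_n$ for which $\X/K$ has the \emph{same signature} as $\X/H$, with no centrality imposed on a generator of $K$. Bridging this gap is where I expect the real work, and the main obstacle, to lie: one must rule out a hypothetical non-central competitor.

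To do this I would re-examine the argument of Proposition~\ref{unicidad} and check that it uses only the centrality of $\tau$, never that of a generator $\eta$ of a competing $K$. Indeed, centrality of $\tau$ alone already gives $\tau\eta=\eta\tau$, so $\langle\tau,\eta\rangle\cong C_n^2$ and $\eta$ descends to an order-$n$ element of $\mathrm{PGL}_2(k)$ acting on $\X/\langle\tau\rangle\cong\P^1_k$; the all-order-$n$ condition on both quotients then forces $\X$ to be isomorphic to $\X_1\colon y^n=(x^n-1)(x^n-a^n)$ or to the Fermat curve $\X_2\colon y^n=x^n-1$, exactly as in that proof. The case $\X_1$ is excluded because $\X_1/\langle\eta\rangle$ is the curve $y^n=(t-1)(t-a^n)$ with $t=x^n$, which has positive genus (recall that $g\geq 2$ forces $n\geq 3$ here), so it cannot share the genus-zero signature of $\X/H$. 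The Fermat case $\X_2$ is excluded because its full automorphism group $C_n^2\rtimes S_3$ carries no central superelliptic automorphism, so $\X$ could not have been superelliptic to begin with. Hence no competitor $K\neq H$ exists, and $H$ is unique in the sense required by Theorem~\ref{thm1}.

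With uniqueness in hand, the three hypotheses of Theorem~\ref{thm1} are met and we conclude that $\X$ is definable over its field of moduli. The only delicate point, as flagged above, is to confirm that the normal-form derivation of $\X_1$ and $\X_2$ in Proposition~\ref{unicidad} survives the weakening from ``$\eta$ central'' to ``$\eta$ generating a $C_n$ with genus-zero, all-order-$n$ quotient''; granting that, the argument is otherwise routine.
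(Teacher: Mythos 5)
Your proposal is correct, and at the top level it follows the same route as the paper: the theorem is deduced by applying Theorem~\ref{thm1} to $H=\langle\tau\rangle$, with Proposition~\ref{unicidad} supplying the uniqueness hypothesis. The difference is that the paper's own proof is exactly that one-sentence citation, and it silently identifies the two notions of uniqueness that you are careful to distinguish: Proposition~\ref{unicidad}, as stated and as proved, only excludes a second \emph{central} $C_n$ with genus-zero, all-order-$n$ quotient, whereas the hypothesis of Theorem~\ref{thm1} requires that no subgroup $K\cong C_n$ whatsoever have a quotient of the same signature as $\X/H$. Your bridge is precisely what is needed to make the citation legitimate, and it works: every step of the proof of Proposition~\ref{unicidad} up to the normal forms $\X_1\colon y^{n}=(x^{n}-1)(x^{n}-a^{n})$ and $\X_2\colon y^{n}=x^{n}-1$ uses only the centrality of $\tau$ together with the same-signature hypothesis on $\X/K$ (commutation of $\tau$ with $\eta$, trivial intersection via the no-common-fixed-point argument and cyclicity of point stabilizers, descent of $\eta$ to a rotation of $\X/H\cong\P^{1}_{k}$, and symmetrically descent of $\tau$ to $\X/K$). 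The single step that genuinely needs $\eta$ central is the paper's exclusion of $\X_1$ via the automorphism $\gamma(x,y)=(a/x,\,ay/x^{2})$, and you replace it by a different and more robust argument: $\X_1/\langle\eta\rangle$ is $y^{n}=(t-1)(t-a^{n})$, $t=x^{n}$, of genus $(n-1)/2$ (for $n$ odd) or $(n-2)/2$ (for $n$ even), hence positive for $n\geq 3$, contradicting that $\X/K$ has genus zero; the Fermat exclusion is identical in both. So your version buys a complete deduction of the theorem (and, in passing, a strengthening of Proposition~\ref{unicidad} to non-central competitors), at essentially no extra cost. One small caveat: like the paper, you take for granted that in the model $\X_1$ the competitor acts as $\eta(x,y)=(\omega_{n}x,y)$; strictly, $\eta$ is some lift $\eta_0\tau^{j}$ of the rotation, but only $j=0$ and $j=2$ give lifts with fixed points, and the quotient for $j=2$, namely $w^{n}=(t-1)(t-a^{n})/t^{2}$, again has positive genus, so your argument goes through for either lift.
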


\s

As a consequence of the above, we only need to take care of the case when the reduced group $\G=G/H$ is either trivial or cyclic. As a consequence of Theorem \ref{thm2} we have the following fact.

\begin{thm}\label{teoAQ}
Let $\X$ be a superelliptic curve of genus $g \geq 2$ with superelliptic group $H \cong C_{n}$ so that $\G=G/H$ is either trivial or cyclic. If $\X$ has odd signature, then it can be defined over its field of moduli.
\end{thm}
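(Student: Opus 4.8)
The plan is to reduce Theorem~\ref{teoAQ} to a direct application of Theorem~\ref{thm2}. The statement hypothesizes that $\X$ has odd signature, and Theorem~\ref{thm2} says precisely that any genus $g \geq 2$ curve with odd signature is definable over its field of moduli. So at first glance the proof should be a single line: the hypothesis of Theorem~\ref{teoAQ} is exactly the hypothesis of Theorem~\ref{thm2}, hence the conclusion follows immediately. The subtlety I would need to watch is that Theorem~\ref{thm2} is stated in terms of the \emph{full} signature of $\X/\Aut(\X)$, whereas the setup of Theorem~\ref{teoAQ} naturally hands us the signature of the intermediate quotient $\X/H$ by the superelliptic group. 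So the first real step is to pin down with respect to which quotient ``odd signature'' is being asserted.

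First I would recall the structure: $H=\langle\tau\rangle\cong C_n$ is the (unique, by Proposition~\ref{unicidad}) superelliptic group, $G=\Aut(\X)$, and $\G=G/H$ is assumed trivial or cyclic. When $\G$ is trivial we have $G=H=C_n$, so $\X/\Aut(\X)=\X/H$ has genus zero and every cone point has order $n$; thus the whole signature is $(n,\dots,n)$ with $r$ entries, and this is odd in the sense of Theorem~\ref{thm2} exactly when the single repeated cone order $n$ appears an odd number of times, i.e.\ $r$ is odd. When $\G$ is cyclic of order $m>1$, I would compute $\X/\Aut(\X)=(\X/H)/\G$, which is again a genus-zero orbifold (a quotient of $\P^1$ by a cyclic group in $PGL_2(k)$), and determine its signature from the branching data of $\X\to\X/H$ together with the action of $\G$ on the $\P^1=\X/H$. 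The point is simply that the hypothesis ``$\X$ has odd signature'' in the statement refers to this full quotient $\X/\Aut(\X)$, so once that orbifold is genus zero the definition of odd signature applies verbatim.

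With that identification in hand, the proof is immediate: the hypothesis that $\X$ has odd signature is, by definition (genus zero quotient plus one cone order appearing an odd number of times), exactly the hypothesis of Theorem~\ref{thm2}. Invoking Theorem~\ref{thm2} then yields that $\X$ is defined over its field of moduli, which is the conclusion. I would phrase the proof so that it explicitly observes $\X/\Aut(\X)$ has genus zero (using that $\G$ is trivial or cyclic, so a finite cyclic subgroup of $PGL_2(k)$ acting on the rational curve $\X/H$) and then cites Theorem~\ref{thm2}.

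The step I expect to be the only genuine obstacle is the bookkeeping around odd signature, not any deep argument. The delicate point is that Theorem~\ref{thm2} demands the odd multiplicity be measured in the signature of $\X/\Aut(\X)$, and when $\G$ is nontrivial cyclic one must verify that quotienting the level-$n$ cone points of $\X/H$ by the $\G$-action really does produce a genus-zero signature to which the odd-signature criterion can be applied. Since the statement already \emph{assumes} $\X$ has odd signature, I do not need to re-derive the signature from scratch; I only need to confirm that the assumed odd signature is the signature of a genus-zero quotient so that Theorem~\ref{thm2} is applicable, which follows from $\G$ being trivial or cyclic. Thus the whole proof is essentially a one-step reduction, and the care lies entirely in making the genus-zero observation explicit before citing Theorem~\ref{thm2}.
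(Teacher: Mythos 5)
Your proposal is correct and takes essentially the same route as the paper: the paper gives no separate proof of Theorem~\ref{teoAQ}, introducing it only with the phrase ``As a consequence of Theorem~\ref{thm2} we have the following fact,'' which is exactly your one-step reduction. The only superfluous part of your argument is the worry about verifying that $\X/\Aut(\X)$ has genus zero, since the paper's definition of odd signature already builds that requirement in, so the hypothesis of Theorem~\ref{thm2} is satisfied verbatim.
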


\s

As a consequence, the only cases we need to take care are those superelliptic curves with reduced group $\G=G/H$ being either trivial or cyclic and with $\X/G$ having not an odd signature.

\section{Superelliptic curves of genus at most 10}
Using the list provided in Section \ref{Sec:tablas} and the previous results, we proceed, in each genus $2 \leq g \leq 10$, to describe those superelliptic curves which are definable over their field of moduli. Observe that in the left cases (which might or might not  be definable over their field of moduli) the last column provides an algebraic model $y^{n}=f(x)$, where $f(x)$ is defined over the algebraic closure and not necessarily over a minimal field of definition. The branched regular covering $\pi:\X \to {\mathbb P}_{k}^{1}$ defined by $\pi(x,y)=x$ as deck group $H=\langle \tau(x,y)=(x,\epsilon_{n} y) \rangle \cong C_{n}$. 

\s
\subsubsection{\bf Genus $2$}
The case of genus $g=2$ is well known since in this case every curve $\X$ such that $|\Aut (\X) | > 2$ the field of moduli is a field of definition. There are examples of genus two curves, whose reduced group is trivial, which are not definable over their field of moduli.

\s
\subsubsection{\bf Genus $3$}
There are 21 signatures for genus $g=3$ from which 12 of them are hyperelliptic and $3$ are trigonal. 

\begin{lem}
Every superelliptic curve of genus $3$, other than Nr. 1 and 2 in Table~\ref{g=3}, is definable over its field of moduli. 
\end{lem}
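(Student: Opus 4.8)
The plan is to filter the complete list of genus~$3$ superelliptic families recorded in Table~\ref{g=3} through the two sufficient conditions already established, namely Theorem~\ref{teounico} (non-cyclic reduced group) and Theorem~\ref{teoAQ} (odd signature), and to check that exactly the two entries Nr.~1 and Nr.~2 survive both. Each family is pinned down by its level $n$, its full automorphism group $G$, its reduced group $\bAut(\X)=G/H$ with $H\cong C_n$ the (unique, by Proposition~\ref{unicidad}) superelliptic group, and the signature of $\X\to\X/G$; once these data are read from the table the argument is a finite case check.

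First I would dispose of every family whose reduced group $\bAut(\X)$ is neither trivial nor cyclic, i.e.\ $\bAut(\X)\in\{D_m,A_4,S_4,A_5\}$. Proposition~\ref{unicidad} makes $H$ unique in $\Aut(\X)$, so Theorem~\ref{teounico} applies directly and every such curve is definable over its field of moduli. A representative instance is the generic level-$4$ family $y^4=x(x-1)(x-\lambda)$: the M\"obius involution $x\mapsto \lambda/x$ permutes the four branch points $\{0,1,\lambda,\infty\}$ and lifts to the automorphism $(x,y)\mapsto(\lambda/x,\sqrt{\lambda}\,y/x)$, and together with the other double transpositions this produces a copy of $V_4$ inside $\bAut(\X)$, which is therefore non-cyclic.

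Next come the families with $\bAut(\X)$ trivial or cyclic. Here I use that, for a superelliptic curve, $\X/\Aut(\X)$ is the quotient of the genus zero orbifold $\X/H$ by the finite group $\bAut(\X)<PGL_2(k)$, hence still has genus zero; consequently $\X$ has odd signature exactly when some cone order appears an odd number of times in the signature of $\X\to\X/\Aut(\X)$. For each such stratum I would compute that signature by the standard Riemann--Hurwitz and orbit--stabilizer bookkeeping along $\X\to\X/\Aut(\X)$, keeping careful track of the fixed points of the reduced-group generators (typically over $x=0$ and $x=\infty$). In essentially all of them a cone order of odd multiplicity appears and Theorem~\ref{teoAQ} closes the case: for example the level-$3$ family $y^3=(x^2-b_1)(x^2-b_2)$ with $\bAut(\X)=C_2$ has signature $(0;3,3,6,2)$, and the level-$2$ family $y^2=x\prod_{i=1}^{3}(x^2-b_i)$, whose full group is $C_4$ and whose reduced group is again $C_2$, has signature $(0;4,4,2,2,2)$.

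The only families escaping both tests are the generic hyperelliptic curve $y^2=f_8(x)$, with trivial reduced group and signature $(0;2^8)$, and the symmetric hyperelliptic curve $y^2=f(x^2)$, with full group $V_4$, reduced group $C_2$, and signature $(0;2^6)$; in each every cone order occurs an even number of times, so neither sufficient condition applies. These are precisely Nr.~1 and Nr.~2, and all remaining genus~$3$ superelliptic curves are definable over their field of moduli. The delicate point---and the main obstacle---is the second step: the same cyclic reduced group $C_2$ occurs both for an escaping family ($y^2=f(x^2)$) and for a family settled by odd signature (the $C_4$ case above), so the reduced group alone does not decide the matter; one must determine the exact ramification signature of $\X\to\X/\Aut(\X)$ in each cyclic stratum and verify that an odd-multiplicity cone order is present in all but the two even-signature exceptions.
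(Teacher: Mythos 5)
Your proposal is correct and follows essentially the same route as the paper: dispose of every family with non-cyclic reduced group via Theorem~\ref{teounico} (using the uniqueness of the superelliptic group from Proposition~\ref{unicidad}), then apply the odd-signature criterion of Theorem~\ref{teoAQ} to the trivial/cyclic strata, leaving exactly Nr.~1 (signature $2^8$) and Nr.~2 (signature $2^6$) as the cases where neither test applies. The only cosmetic difference is that the paper handles the zero-dimensional (quasiplatonic) loci by exhibiting explicit rational models instead of folding them into the two criteria, but this does not change the substance of the argument.
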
 
 
\begin{proof}
 If $\bAut (\X)$ is isomorphic to $A_4$ or $S_4$ then the corresponding locus consists of the curves $y^4= x^4+ 2x^2 + \frac 1 3 $ and $y^2=   x^8 + 14 x^4 + 1$ which are both defined over their field of moduli. 

If $\bAut (\X)$ is isomorphic to a dihedral group and $\X$ is not hyperelliptic, then $\Aut (\X)$ is isomorphic to $V_4 \times C_4$, $G_5$, $D_6 \times C_3$, and $G_8$.  These cases  $G_5$, $D_6 \times C_3$, and $G_8$ correspond to $y^4=  x^4-1$, $y^3= x (x^3-1)$, and $y^4 = x (x^2-1)$, which are all defined over the field of moduli.  

\begin{table}[h]

\caption{Genus $3$ curves No. 1 and 2 are the only one whose field of moduli is not necessarily a field of definition}

\begin{tabular}{|l|l|l|l|l|l|l|l|}
\hline
Nr. & $\overline G$             & G      &$n$  &$m$ & sig. & $\delta$ & Equation $y^n=f(x)$ \\
\hline \hline
\textbf{\color{blue}\color{blue}1} &$\{I\}$ & $C_2$           &  2 & 1 & $2^8$  & 5 &  $  x \left( x^6+ \sum_{i=1}^5 a_i x^i   +1 \right) $ \\
\textbf{\color{blue}\color{blue}2} & $C_2$ &  $ V_4$         &  2 & 2 & $2^6$       & 3 &  $ x^8+ a_1 x^2+ a_2 x^4+ a_3 x^6+1 $ \\ 
3 &$C_2$ &         $C_4$  &  2 & 2 & $2^3, 4^2 $         & 2 &   $ x \left(x^6+a_1 x^2+a_2 x^4+1 \right)$ \\
4 &$C_2$ &    $C_6$       &  3 & 2 & $2, 3^2,6$             & 1 &   $  x^4+a_1 x^2+1 $ \\
5 & $V_{4}$ & $V_4\times C_4$ &  4 & 2 & $2^3, 4$         & 1 &  $ x^4+a_1 x^2+1$ \\
\hline \hline
\end{tabular}
\label{g=3}
\end{table}

If $\bAut(\X)$ is isomorphic to a cyclic group, then in the cases when it is isomorphic to $C_{14}, C_{12}$ there are two cases which correspond to the curves $y^2= x^7+1$ and $y^3= x^4+1$. The left cases  are given in Table~\ref{g=3}. The curve No. 5 is definable over its field of moduli by Theorem \ref{teounico}. All the other cases, with the exception of Nr. 1 and 2,  the curves are of odd signature, so they are definable over their field of moduli by Theorem \ref{teoAQ}.

\end{proof}

\subsubsection{\bf Genus $4$}
\begin{lem}
Every superelliptic curve of genus 4, other than Nr. 1, 3 and 5 in Table~\ref{g=4}, is definable over its field of moduli.
\end{lem}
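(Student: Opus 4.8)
The plan is to mirror the genus $3$ argument, using the complete classification of automorphism groups and signatures of superelliptic curves of genus $4$ recorded in Table~\ref{g=4} (taken from \cite{Sa} and \cite{Sa-sh}). Since this is a finite list, I would sort the families according to their reduced group $\bAut(\X)=\Aut(\X)/H$ and dispatch each one to one of the two sufficient conditions already established, so that the only rows surviving both tests are precisely the ones named in the statement.

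First I would dispose of all families whose reduced group is neither trivial nor cyclic, namely those with $\bAut(\X)$ dihedral, $A_4$, $S_4$ or $A_5$. By Proposition~\ref{unicidad} the superelliptic group $H\cong C_n$ is unique in $\Aut(\X)$, so Theorem~\ref{teounico} applies directly and every curve in such a family is definable over its field of moduli. This handles all such rows uniformly, whether or not the underlying curve happens to be hyperelliptic, since Theorem~\ref{teounico} already subsumes the hyperelliptic case of Corollary~\ref{cor1}.

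Next, for the families with trivial or cyclic reduced group I would read the signature of the cover $\X\to\X/G$ off the table and test the odd-signature criterion: whenever some cone order occurs an odd number of times, Theorem~\ref{teoAQ} gives definability over the field of moduli. The content of the lemma is the assertion that this sweeps up every such family except Nr.~$1$, $3$ and $5$. For those three rows I would check that the reduced group is trivial or cyclic \emph{and} that each cone order in the signature appears an even number of times, so that neither Theorem~\ref{teounico} nor Theorem~\ref{teoAQ} is available; these are therefore the only families I would leave open.

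The main obstacle is purely the bookkeeping of signatures: one must verify, row by row, the parity of each cone-order multiplicity and confirm that exactly the three exceptional rows fail the odd-signature test while having a trivial or cyclic reduced group. A secondary point to keep in mind is that some low-dimensional families are quasiplatonic (that is, $\delta=0$), for which definability is in any case guaranteed by Wolfart's Theorem~\ref{Wolfart}; I would record this so that no isolated curve is mistakenly flagged as problematic.
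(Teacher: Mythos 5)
Your proposal is correct and follows essentially the same strategy as the paper's proof: apply Theorem~\ref{teounico} (via uniqueness of the superelliptic group, Proposition~\ref{unicidad}) to all rows with non-trivial, non-cyclic reduced group, then apply the odd-signature criterion of Theorem~\ref{teoAQ} to the trivial/cyclic rows, leaving exactly Nrs.~1, 3 and 5 (signatures $2^{10}$, $2^4 4^2$, $3^6$, all with even multiplicities). The only cosmetic difference is that the paper handles a few zero-dimensional families (e.g.\ the $S_4$ case and the $C_{18}$, $C_{15}$ curves) by exhibiting explicit rational equations, whereas you dispatch them uniformly through Theorem~\ref{teounico} or Wolfart's theorem, which is equally valid.
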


\begin{proof}

There is only one case when  the reduced automorphism group $\bAut(\X)$  is not isomorphic to a cyclic or a dihedral group, namely $\bG\iso S_4$.  In this case, the curve is $y^3 = x (x^4-1)$ and is defined over the field of moduli. If $\bG$ is isomorphic to a dihedral group, then there are only $6$ signatures which give the groups $D_6\times C_3$, $D_4\times C_3$, $D_{12}\times C_3$, $D_4\times C_3$, $D_8 \times C_3$, and $D_4 \times C_5$.  The groups $D_{12}\times C_3$, $D_8 \times C_3$, and $D_4 \times C_5$ correspond to curves $y^3 = x^6-1$, $y^3=x(x^4-1)$, and $y^5= x(x^2-1)$ respectively.  
The remaining three cases are given by Nrs. 7, 8 and 9 in Table~\ref{g=4} which are definable over their field of moduli by Theorem \ref{teounico}.

\begin{table}[hb]

\caption{Genus $4$ curves No. 1, 3 and 5 are the only ones whose field of moduli is not necessarily a field of definition}

\begin{tabular}{|l|l|l|l|l|l|l|l|}
\hline
Nr. & $\overline G$                & G      &$n$  &$m$ & sig. & $\delta$ & Equation $y^n=f(x)$ \\
\hline \hline
\textbf{\color{blue}\color{blue}1} & & $C_2$               &  2     & 1 & $2^{10}$       &  7     &  $  x \left( x^8+ \sum_{i=1}^7 a_i x^i   +1 \right) $ \\

2 & &  $ V_4$             &  2     & 2 & $2^7$       &  4     &  $ x^{10}+  \sum_{i=1}^4 a_i x^{2i}+1 $ \\ 

\textbf{\color{blue}\color{blue}3} & $C_m$ & $C_4$               &  2     & 2 & $2^4, 4^2$  &  3     &  $ x (x^8+a_3 x^6+a_2 x^4+a_1 x^2+1) $  \\

4 & & $C_6$               &  2     & 3 & $2^3, 3, 6$ &  2     &  $ x^9+a_1 x^3+a_2 x^6+1 $             \\

\textbf{\color{blue}\color{blue}5} &  & $C_3$              &  3     & 1 &$3^6$        &  3     &  $ x (x^4+a_1 x+a_2 x^2+a_3 x^3+1) $  \\

6 &  & $C_2 \times C_3$   &  3     & 2 & $2^2, 3^3 $  & 2     &  $ x^6+a_2 x^4+a_1 x^2+1 $             \\
\hline

7 &  & $D_6 \times C_3$   &  3     & 3 & $2^2, 3^2 $  & 1     &  $ x^6+a_1 x^3+1 $                   \\
8 & $D_{2m}$ & $V_4 \times C_3 $  &  3     & 2 & $2^2,  3, 6$ & 1     &  $ (x^2-1)(x^4+a_1 x^2+1) $         \\
9 & &  $V_4 \times C_3$   &  3     & 2 & $2^2, 3, 6$ & 1     &  $ x (x^4+a_1 x^2+1)   $              \\

\hline \hline
\end{tabular}

\label{g=4}
\end{table}

If $\bAut(\X)$ is isomorphic to a cyclic group, then there are two signatures for each of the groups $C_{18}$ and $C_{15}$.  In each case, both signatures give the same curve, namely $y^2 = x^9+1$ and $y^3=x^5+1$ respectively.  The left cases are given by Nrs. 1 to 6 in  Table~\ref{g=4}. As all cases, with the exception of Nrs. 1, 3 and 5, the curves are of odd signature; so definable over their field of moduli by Theorem \ref{teoAQ}.

\end{proof}

\s
\subsubsection{\bf Genus $5 \leq g \leq 10$}
We proceed to indicate which cases in the table provided in Section \ref{Sec:tablas} are definable over the field of moduli. 

\begin{enumerate}
\item Genus $5$: We may see from the table in Section \ref{Sec:tablas} that, for $g=5$, there are 20 cases to consider. By Theorem \ref{teounico} all cases, from Nr. 8 to Nr. 20, are definable over their field of moduli. The left cases, with the exception of Nrs. 1, 2 and 6, are of odd signature, so they are definable over their field of moduli.

\s
\item Genus $6$:
We see from the table in Section \ref{Sec:tablas} that, for $g=6$, there are 36 cases to consider. By Theorem \ref{teounico} all cases, from Nr. 17 to Nr. 36, are definable over their field of moduli. The left cases, with the exception of Nrs. 9, 10, 13 and 15, are of odd signature, so they are definable over their field of moduli.

\s
\item Genus $7$:
We see from the table in Section \ref{Sec:tablas} that, for $g=7$, there are 27 cases to consider. By Theorem \ref{teounico} all cases, from Nr. 14 to Nr. 27, are definable over their field of moduli. The left cases, with the exception of Nrs. 1, 2 and 11, are of odd signature, so they are definable over their field of moduli.

\s
\item Genus $8$:
We see from the table in Section \ref{Sec:tablas} that, for $g=8$, there are 22 cases to consider. By Theorem \ref{teounico} all cases, from Nr. 9 to Nr. 22, are definable over their field of moduli. The left cases, with the exception of Nrs. 2, 6, 7 and 8, are of odd signature, so they are definable over their field of moduli.

\s
\item Genus $9$:
We see from the table in Section \ref{Sec:tablas} that, for $g=9$, there are 50 cases to consider. By Theorem \ref{teounico} all cases, from Nr. 23 to Nr. 50, are definable over their field of moduli. The left cases, with the exception of Nrs. 1, 3, 4, 14, 16 and 20, are of odd signature, so they are definable over their field of moduli.

\s
\item Genus $10$:
We see from the last table in Section \ref{Sec:tablas} that, for $g=10$, there are 55 cases to consider. From them, there are 18 hyperelliptic, 18 trigonal, and 4 quintagonal. By Theorem \ref{teounico} all cases, from Nr. 24 to Nr. 55, are definable over their field of moduli. The left cases, with the exception of Nrs. 2, 3, 16, 17, 19, 20 and 23, are of odd signature, so they are definable over their field of moduli.

\end{enumerate}

\section{Tables of superelliptic curves of genus $5 \leq g \leq 10$}\label{Sec:tablas}

The following tables are taken from \cite[Table~1]{zhupa}.  The first column of the tables is simply a counter for each genus  $5 \leq g \leq 10$.   The second column is the reduced automorphism group and the third column  some information about  the full automorphism group. Notice that such column is left mostly empty, but a presentation of the group via its generators can be found in \cite{Sa} for all the cases. 
In the fourth column is the level $n$ of the superelliptic curve.  Hence, the equation of the curve is given by  $y^n = f(x)$, 
where $f(x)$ is the polynomial displayed in the last column.  Columns 5 and 6 respectively represent the order of an automorphism in the reduced automorphism group and the signature of the covering $\X \to \X/ G$. The seventh column represents the dimension of the corresponding locus in the moduli space $\M_g$. 

In \cite{zhupa}, the authors create a database of superelliptic curves.  Moreover, they display all curves of genus $g \leq 10$ in \cite[Table 1]{zhupa}.  We present such tables below. 
  The first column of the table represents the case from Table~1 of \cite{Sa}, the second column is the reduced automorphism group.  In the third column is the full automorphism group. Such groups are well known and  only  the 'obvious' cases are displayed, for full details one can check \cite{Sa} and \cite{Sa-sh}.  

In the fourth column is the level $n$ of the superelliptic curve; see \cite{super1}.  Hence, the equation of the curve is given by  $y^n = f(x)$, 
where $f(x)$ is the polynomial displayed in the last column.  Columns 5 and 6, respectively,  represent the order of the superelliptic  automorphism in the reduced automorphism group and the signature of the covering $\X \to \X/ G$. The sixth column represents the dimension of the corresponding locus in the moduli space $\M_g$. 
Throughout these tables $f_1 (x)$ is as follows
\[ f_1(x) = x^{12}-a_1 x^{10}-33x^8+2a_1x^6-33x^4-a_1x^2+1 \]
for  $a_1 \in \C$. 
In \cite[Table~1]{zhupa}  the signatures of the coverings are not fully given.  Indeed, for a full signature $\left( \sigma_1, \dots , \sigma_r\right)$, we know that   $\sigma_r = \sigma_{r-1}^{-1}\cdots \sigma_1^{-1}$.  Hence,  the  authors present only $(\sigma_1, \dots , \sigma_{r-1})$. In our Table~3 that follows, we present the 
full signature $\left( \sigma_1, \dots , \sigma_r\right)$.  

For example, in the case $g=5$, Nr.1,   we have a signature of eight branch points, each corresponding to a double transposition.  Such signature is presented as $2^8$.  
Another clue for the reader must be that    the moduli dimension is always $\delta= r-3$, where $r$ is the number of branch points of the covering.

\begin{rem}
It is worth mentioning, to avoid any confusion, that the equations of the curves in column eight are not over the field of moduli of the corresponding curve. They are used only to help identify the corresponding family. 
\end{rem}

\subsection*{Acknowledgements}
The authors want to thanks Jeroen Sijsling for his comments and suggestions to a previous version which help us to correctly state some points of this paper.

\begin{Small} 

\begin{table}[hbt]
\caption{Superelliptic curves for genus $5 \leq g \leq 10$}
\begin{tabular}{|l|l|l|l|l|l|l|l|}
\hline
Nr. & $\overline G$             & G&$n$  &$m$ & sig. & $\delta$ & Equation $y^n=f(x)$ \\
\hline \hline
\multicolumn{8}{c}{Genus 5} \\
\hline \hline
 \textbf{\color{blue}\color{blue}1}& \multirow{4}{*}{$C_m$} &   $V_4$            &  2  & 2  & $2^8$      & 5  & $x^{12} + \sum_{i=1}^5 a_i x^{2i} + 1$  \\ 
 \textbf{\color{blue}\color{blue}2}&                        &   $C_3 \times C_2$ &  2  & 3  & $2^4, 3^2$ & 3  & $x^{12} + \sum_{i=1}^3 a_i x^{3i} + 1$ \\   
 3&                        &   $C_2\times C_4$  &  2  & 4  & $2^3, 4^2$ & 2  & $x^{12} + a_2 \, x^8 + a_1\, x^4 +1$ \\  
 4&                        & $C_{22}$           &  2  & 11 & 2, 11, 22  & 0  & $x^{11}+1$ \\ 
 5&                        & $C_{22}$           &  11 & 2  & 2, 22, 22  & 0  & $x^2+1$ \\ 
 \textbf{\color{blue}\color{blue}6}&                        & $C_2$              &  2  & 1  & $2^{12}$   & 9  & $x (x^{10}+\sum_{i=1}^9 a_ix^i +1)$ \\ 
 7&                        & $C_4$              &  2  & 2  & $2^5$, $4^2$      & 4  & $x (x^{10}+\sum_{i=1}^4 a_i x^{2i} +1)$ \\ 
\hline
 8& \multirow{4}{*}{$D_{2m}$} &  &  2 & 2  & $2^6$        &  3 & $ \prod_{i=1}^3 (x^4+a_i x^2+1) $ \\ 
 9&                           &  &  2 & 3  & $2^4$, 3     &  2 & $(x^6+a_1x^3+1)(x^6+a_2x^3+1)$ \\   
 10&                           &  &  2 & 6  & $2^3$, 6     &  1 & $x^{12}+a_1x^6+1$ \\  
 11&                           &  &  2 & 4  & $2^2$, $4^2$ &  1 & $(x^4-1)(x^8+ a_1 x^4 +1)$  \\ 
 12&                           &  &  2 & 12 & 2, 4, 12     &  0 & $x^{12}-1$  \\ 
 13&                           &  &  2 & 5  & $2^3$, 10    &  1 & $x(x^{10}+a_1x^5+1)$ \\ 
 14&                           &  &  2 & 2  & $2^3$, $4^2$ &  2 & $(x^4-1)(x^4+a_1x^2+1)(x^4+a_2x^2+1)$ \\  
 15&                           &  &  2 & 3  & 2, 3, $4^2$  &  1 & $(x^6-1)(x^6+a_1x^3+1)$ \\  
 16&                           &  &  2 & 2  & $2^3$, $4^2$ &  2 & $x(x^2-1)(x^4+a_1x^2+1)(x^4+a_2x^2+1)$ \\  
 17&                           &  &  2 & 10 & 2, 4, 20     &  0 & $x (x^{10} -1)$ \\   
\hline   
18 &   $A_4$                     &  & 2 &  & $2^2$, $3^2$ & 1 & $f_1(x)$ \\  
\hline   
19 &   $S_4$                     &  & 2  & 0 & 3, $4^2$ & 0 & $x^{12} - 33x^8-33x^4+1$ \\  
\hline   
20 &   $A_5$                     &  & 2 &  & 2,3,10 & 0 & $x (x^{10} + 11x^5-1)$ \\  
\hline \hline
%
\end{tabular}
\end{table}

\addtocounter{table}{-1}
\begin{table}
\caption{(Cont.)}
\begin{tabular}{|l|l|l|l|l|l|l|l|}
\hline
Nr. & $\overline G$             & G&$n$  &$m$ & sig. & $\delta$ & Equation $y^n=f(x)$ \\
\hline  
\multicolumn{8}{c}{Genus 6} \\
\hline \hline
 1& \multirow{4}{*}{$C_m$} &   $V_4$        &  2  & 2      & $2^9$      & 6 & $x^{14} + \sum_{i=1}^6 a_i x^{2i} + 1$  \\
 2&                        &   $C_{26}$        &  2  & 13  & 2, 13, 26 & 0 & $x^{13} + 1$ \\
 3&                        &   $C_{21}$        &  3  & 7   & 3, 7, 21& 0  & $x^7 +1$ \\
 4&                        & $C_{20}$  &  4  & 5           & 4, 5, 20   & 0   & $x^{5}+1$ \\
 5&                        & $C_{10}$  &  5 & 2            &   $2, 5, 5, 10$  &  1    & $x^4+a_1x^2+1$ \\
 6&                        & $C_{20}$     &  5  & 4        &  4, 5, 20   & 0 & $x^4+1$ \\
 7&                        & $C_{21}$     &  7  & 3        & 3, 7 , 21   & 0 & $x^3 +1$ \\
 8&                        & $C_{26}$     &  13 & 2  & 2, 13, 26   & 0 & $x^2 +1$ \\
\textbf{\color{blue}\color{blue} 9}&                        & $C_{2}$     &  2  & 1  & $2^{14}$    & 11 & $x ( x^{12} +\sum_{i=1}^{11} a_i x^{i}+1 )$ \\
 \textbf{\color{blue}\color{blue}10}&                        & $C_{4}$     &  2  & 2  & $2^6, 4^2$    & 5 & $x (x^{12} +\sum_{i=1}^{5} a_i x^{2i}+1)$ \\
 11&                        & $C_{6}$     &  2  & 3  & $2^3, 3^2, 6^2$    & 3 & $x (x^{12} +\sum_{i=1}^{3} a_i x^{3i}+1)$ \\
 12&                        & $C_{8}$     &  2  & 4  & $2^3, 8^2$    & 2 & $x (x^{12}+ \sum_{i=1}^{2} a_i x^{4i}+1) $ \\
 \textbf{\color{blue}\color{blue}13}&                        & $C_{3}$     & 3  & 1  & $3^8$    & 5 & $x^{6}+ \sum_{i=1}^{5} a_i x^{i}+1$ \\
14&                        & $C_{6}$     &  3  & 2  & $3^3, 6^2$    & 2 & $x^{6}+a_2x^4+a_1x^2+1$ \\
 \textbf{\color{blue}\color{blue}15}&                        & $C_{4}$     &  4  & 1  & $4^6$    & 3 & $x^{4}+ \sum_{i=1}^{3} a_i x^{i}+1$ \\
 16&                        & $C_{5}$     &  5  & 1  & $5^5$    & 2 & $x^3 +a_1x+a_2x^2+1$ \\
\hline
 17& \multirow{4}{*}{$D_{2m}$} &$ D_{14}\times C_2$ &  2 & 7  & $2^3, 7$        &  1 & $x^{14}+a_1x^7+1)$ \\
 18&                           & $G_5$ &  2 & 2  & $2^5$, 4     &  3 & $(x^2-1) \prod_{i=1}^3 (x^4+a_i x^2+1) $ \\
 19&                           & $G_5$ &  2 & 14  & $2, 4, 14$     &  0 & $x^{14}- 1$ \\
 20&                           & $ D_{10}\times C_2$ &  5 & 5  & $2, 5, 10$ &  0 & $x^5-1$  \\
 21&                           & $D_8$ &  2 & 2 & $2^5, 4 $     & 3 & $x \cdot  \prod_{i=1}^3 (x^4+a_i x^2+1)  $  \\
 22&                           &$ D_{6}\times C_2$  &  2 & 3  & $2^4$, 6    &  2 & $x \cdot \prod_{i=1}^2 (x^4+a_i x^2+1)$ \\
  23&   \multirow{4}{*}{$D_{2m}$}                        &$D_{24}$  &  2 & 6  & $2^3$, 12 &  1 & $x(x^{12}+a_1x^6+1)$ \\
 24&                           & $ D_{6}\times C_3$ &  3 & 3  &  $2^2, 3, 9$  &  1 & $x(x^6+a_1x^3+1)$ \\
 25&                           & $ D_{16}$ &  4 & 2  & $2^2$, 4, 8 &  1 & $x(x^4+a_1x^2+1)$ \\
 26&                           & $G_8$ &  2 & 4 & $2^2$, 4, 8     &  1 & $x (x^{4} -1)(x^8+a_1x^4+1)$ \\
 27&                           & $G_8$ &  2 & 12 & 2, 4, 24     &  0 & $x (x^{12} -1)$ \\
 28&                           &$ V_{4}\times C_3$  &  3 & 2 & 2, 3, $6^2$     &  1 & $x (x^{2} -1)(x^4+a_1x^2+1)$ \\
 29&                           & $ D_{12}\times C_3$ &  3 & 6 & 2, 6, 18     &  0 & $x (x^{6} -1)$ \\
 30&                           &$G_8$  &  4 & 4 & 2, 8, 16     &  0 & $x (x^{4} -1)$ \\
 31&                           & $ D_{6}\times C_5$ &  5 &3 & 2, 10, 15     &  0 & $x (x^{3} -1)$ \\
 32&                           &$ V_{4}\times C_7$  &  7 & 2 & 2, $14^2$     &  0 & $x (x^{2} -1)$ \\
 33&                           &$G_9$  &  2 & 2 & $2^2$, $4^3$     &  2 & $x (x^{4} -1) \cdot \prod_{i=1}^2 (x^4+a_i x^2+1)$ \\
 34&                           & $G_9$ &  2 & 3 & $2, 4^2, 6$     &  1 & $x (x^{6} -1)(x^6+a_1x^3+1)$ \\
\hline
 35&   $S_4$                     & $ G_{18}$ & 4  & 0 & 2, 3, 16 & 0 & $x(x^4-1)$ \\
 36&                             & $ G_{19}$ & 2  & 0 & 2, 6, 8 & 0 & $x(x^4-1)(x^8+14x^4+1)$ \\
\hline \hline
\multicolumn{8}{c}{Genus 7} \\
\hline \hline
 \textbf{\color{blue}\color{blue}1}& \multirow{4}{*}{$C_m$} &   $V_4$        &  2  & 2             & $2^{10}$      & 7 & $x^{16} + \sum_{i=1}^7 a_i x^{2i} + 1$  \\
 \textbf{\color{blue}\color{blue}2}&                        &   $C_2 \times C_4$        &  2  & 4  & $2^4, 4^2$ & 3 & $x^{16} + \sum_{i=1}^3 a_i x^{4i} + 1$ \\
 3&                        &   ${C_3}^2$        &  3  & 3         & $3^5$ & 2  & $x^{9} + a_2 x^6 + a_1 x^3 +1$ \\
 4&                        & $C_{6}$  &  2  & 3                   &  $2^5$, 3, 6    & 4   & $x^{15}+ \sum_{i=1}^4 a_1 x^{3i} +1$ \\
 5&                        & $C_{10}$  &  2 & 5                   &   $2^3$, 5, 10   &  2    & $x^{15}+a_1x^5+a_2x^{10}+1$ \\
 6&                        & $C_{30}$     &  2  & 15              &  2, 15, 30   & 0 & $x^{15} +1$ \\
 7&                        & $C_6$     &  3  & 2                  & 2, $3^4$, 6         &   3 & $x^8+a_3x^6+a_2x^4+a_1x^2 +1$ \\
 8&                        & $C_{12}$     &  3  & 4               & $3^2$, 4, 12         &   1 & $x^8+a_1x^4+1$ \\
 9&                        & $C_{24}$     &  3  & 8               & 3, 8, 24         &   0 & $x^8+1$ \\
 10&                        & $C_{30}$     &  15  & 2             & 2, 15, 30      &   0 & $x^2+1$ \\
 \textbf{\color{blue}\color{blue}11}&                        & $C_{2}$     &  2  & 1  & $2^{16}$         &   13 & $x(x^{14}+\sum_{i=1}^{13} a_i x^{i}+1)$ \\
 12&                        & $C_{4}$     &  2  & 2  & $2^7, 4^2$       &   6 & $x (x^{14}+\sum_{i=1}^6 a_i x^{2i}+1)$ \\
13&                        & $C_{3}$     &  3  & 1  & $3^9$       &   6 & $x^{7}+\sum_{i=1}^6 a_i x^{i}+1$ \\
\hline
 14& \multirow{1}{*}{$D_{2m}$} & $V_4\times C_2$ &  2 & 2  & $2^7$        &  4 & $\prod_{i=1}^4(x^4+a_ix^2+1)$ \\
 15&                           &$D_8\times C_2$  &  2 & 4  & $2^4$, 4     &  2 & $(x^8+a_1x^4+1)(x^8+a_2x^4+1)$ \\
\hline  
\end{tabular}
\end{table}

\addtocounter{table}{-1}
\begin{table}
\caption{(Cont.)}
\begin{tabular}{|l|l|l|l|l|l|l|l|}
\hline
Nr. & $\overline G$             & G&$n$  &$m$ & sig. & $\delta$ & Equation $y^n=f(x)$ \\
\hline \hline
 16&   \multirow{4}{*}{$D_{2m}$}                        &$D_{16}\times C_2$  &  2 & 8  & $2^3$, 8  &  1 & $x^{16}+a_1x^8+1$ \\
 17&                           &$G_5$  &  2 & 16  & 2, 4, 16 &  0 & $x^{16}-1$  \\
 18&                           & $D_6\times C_3$ &  3 & 3 & 2, $3^2$, 6    &  1 & $(x^3 - 1)(x^6+a_1x^3+1)$  \\
 19&                           & $D_{18}\times C_3$ &  3 & 9 & 2, 6, 9  &  0 & $x^9-1$  \\
 20&                           & $D_{14}\times C_2$ &  2 & 7  & $2^3$, 14    &  1 & $x(x^{14}+a_1x^7+1)$ \\
 21&                           &$G_7$  &  2 & 2  & $2^4$, $4^2$ &  3 & $(x^4-1) \, \prod_{i=1}^3 (x^4+a_i x^2+1) $ \\
 22&                           & $G_7$ &  2 & 4  & 2, $4^3$  &  1 & $(x^8-1)(x^8+a_1x^4+1)$ \\
 23&                           & $G_8$ &  2 & 2  & $2^4$, $4^2$ &  3 & $x(x^2-1)\, \prod_{i=1}^3 (x^4+a_i x^2+1) $ \\
 24&                           & $G_8$ &  2 & 14 & 2, 4, 28     &  0 & $x (x^{14} -1)$ \\
 25&                           & $D_{14}\times C_3$ &  3 & 7 & 2, 6, 21     &  0 & $x (x^{7} -1)$ \\
 26&                           & $G_8$ &  8 & 2 & 2,${16}^2$     &  0 & $x (x^{2} -1)$ \\
\hline 
27 &   $A_4$                     &$K$  & 2 & 0  & $2^2$, 3, 6 & 1 & $(x^4+2 \sqrt{-3} x^2+1) \, f_1(x)$ \\
\hline \hline
\multicolumn{8}{c}{Genus 8} \\
\hline \hline
1& \multirow{4}{*}{$C_m$} &   $V_4$        &  2  & 2             & $2^{11}$      & 8 & $x^{18} + \sum_{i=1}^8 a_i x^{2i} + 1$  \\
 \textbf{\color{blue}\color{blue}2}&                        &   $C_2 \times C_3$        &  2  & 3  &    $2^6, 3^2$ & 5 & $x^{18} + \sum_{i=1}^5 a_i x^{3i} + 1$ \\
3&                        &   $C_2\times C_6$        &  2  & 6   & $2^3, 6^2$ & 2  & $x^{18} + a_1 x^6 + a_2 x^{12} +1$ \\
 4&                        & $C_{34}$  &  2  & 17                 &   2, 17, 34   & 0   & $x^{17}+1$ \\
 5&                        & $C_{34}$  &  17 & 2                  &   2, 17 , 34  &  0    & $x^2+1$ \\
 \textbf{\color{blue}\color{blue}6}&                        & $C_2$     &  2  & 1  &  $2^{18}$   & 15 & $x(x^{16}+\sum_{i=1}^15 a_ix^i +1)$ \\
 \textbf{\color{blue}\color{blue}7}&                        & $C_4$     &  2  & 2  & $2^8, 4^2$        &  7 & $x(x^{16}+\sum_{i=1}^7 a_i x^{2i} +1)$ \\
  \textbf{\color{blue}\color{blue}8}&                        & $C_8$     &  2  & 4  & $2^4, 8^2$        & 3 & $x(x^{16}+a_1x^4+a_2x^8+a_3x^{12} +1)$ \\
\hline
 9& \multirow{4}{*}{$D_{2m}$} &$D_6\times C_2$  &  2 & 3  & $2^5$, 3    & 3 & $ \prod_{i=1}^3 (x^6+a_i x^3+1) $ \\
 10&                           &$D_{18}\times C_2$  &  2 & 9  & $2^3$, 9 & 1 & $x^{18}+a_1x^9+1$ \\
 11&                           & $G_5$ &  2 & 2  & $2^6$, 4     &  4 & $(x^2-1)\prod_{i=1}^4(x^4+a_ix^2+1)$ \\
 12&                           &$G_5$  &  2 & 6  & $2^2$, 4, 6 &  1 & $(x^6-1)(x^{12}+a_1x^6+1)$  \\
 13&                           &$G_5$  &  2 & 18 & 2, 4, 18     &  0 & $x^{18}-1$  \\
 14&                           &$D_8$  &  2 & 2  & $2^6$, 4    &  4 & $x\prod_{i=1}^4(x^4+a_ix^2+1)$ \\
 15&                           &$D_{16}$  &  2 & 4  & $2^4$, 8    &  2 & $x(x^8+a_1x^4+1)(x^8+a_2x^4+1)$ \\
 16&                           &$D_{32}$  &  2 & 8  & $2^3$, 16    &  1 & $x(x^{16}+a_1x^8+1)$ \\
 17&                           &$G_9$  &  2 & 3  & $2^2$, 3,  $4^2$ &  2 & $(x^6-1)(x^6+a_1x^3+1)(x^6+a_2x^3+1)$ \\
 18&                           &$G_8$  &  2 & 16  & 2, 4, 32 &  0 & $x(x^{16}-1)$ \\
 19&                           & $G_9$ &  2 & 2 & $2^3, 4^3$    &  3 & $x \, \prod_{i+1}^3 (x^6+a_i x^3+1) $ \\   
  20&                           & $G_9$ &  2 & 4 & 2, $ 4^2$, 8   &  1 & $x(x^8-1)(x^8+a_1x^4+1)$ \\
\hline
21 &   $A_4$                     & $K$ & 2 & 0 & 2, $3^2$, 4 & 1 & $x(x^4-1)\, f_1 (x)$ \\
\hline
22 &   $S_4$                     &$G_{22}$  & 2  & 0 & 3, 4, 8 & 0 & $x(x^4-1)(x^{12}-33x^8-33x^4+1)$ \\
\hline \hline
\multicolumn{8}{c}{Genus 9} \\
\hline \hline
 \textbf{\color{blue}\color{blue}1}& \multirow{4}{*}{$C_m$} &   $V_4$        &  2  & 2  & $2^{12}$      & 9 & $x^{20} + \sum_{i=1}^9 a_i x^{2i} + 1$  \\
 2&                        &   $C_2 \times C_4$ &  2  & 4  & $2^5, 4^2$ & 4 & $x^{20} + \sum_{i=1}^4 a_i x^{4i} + 1$ \\
 \textbf{\color{blue}\color{blue}3}&                        &   $C_2\times C_5$  &  2  & 5 & $2^4, 5^2$ & 3  & $x^{20} + a_1 x^5 + a_2 x^{10}+a_3x^{15} +1$ \\
 \textbf{\color{blue}\color{blue}4}&                        &   $C_2\times C_4$  &  4  & 2 & $2^2, 4^4$ & 3  & $x^{8} + a_1 x^2 + a_2 x^{4}+a_3x^{6} +1$ \\
 5&                        & $C_{38}$  &  2  & 19 &  2, 19, 38   & 0   & $x^{19}+1$ \\
 
 6&                        & $C_{6}$  &  3 & 2  &   2, $3^5$, 6   &  4    & $x^{10} + a_1 x^2 + a_2 x^{4}+a_3x^{6}+a_4x^8 +1$ \\
 7&                        & $C_{15}$  &  3 & 5  &   $3^2$, 5, 15  &  1   & $x^{10} + a_1 x^5 +1$ \\
 8&                        & $C_{30}$  &  3 & 10  &  3, $10^2$  &  0    & $x^{10} +1$ \\
 9&                        & $C_{28}$  &  4 & 7  &  4, $7^2$   &  0    & $x^7 +1$ \\
 10&                        & $C_{14}$  &  7 & 2  &   $2, 7^2, 14$   &  1   & $x^4+a_1x^2+1$ \\
  11&                        & $C_{28}$  &  7 & 4  &  $4^2$, 7   & 0    & $x^4 +1$ \\
  12&                        & $C_{30}$  &  10 & 3  &  $3^2$, 10    &  0      & $x^3 +1$ \\
  13&                        & $C_{38}$  &  19 & 2  &   $2^2$, 19   &  0      & $x^2  +1$ \\
   \textbf{\color{blue}\color{blue}14}&                        & $C_2$     &  2  & 1  &  $2^{20}$   & 17 & $x (x^{18}+\sum_{i=1}^{17} a_ix^i +1)$ \\
 15&                        & $C_4$     &  2  & 2  & $2^9, 4^2$        &   8 & $x (x^{18}+\sum_{i=1}^8 a_i x^{2i} +1) $ \\
  \textbf{\color{blue}\color{blue}16}&                        & $C_6$     &  2  & 3  & $2^6, 6^2$        &   5 & $x (x^{18}+\sum_{i=1}^5 a_i x^{3i} +1)$ \\
\hline  
\end{tabular}
\end{table}

\addtocounter{table}{-1}
\begin{table}
\caption{(Cont.)}
\begin{tabular}{|l|l|l|l|l|l|l|l|}
\hline 
\hline
Nr. & $\overline G$             & G&$n$  &$m$ & sig. & $\delta$ & Equation $y^n=f(x)$ \\
\hline 
 17&                        & $C_{12}$     &  2  & 6  & $2^3, {12}^2$  &   2 & $ x (x^{18}+ a_1x^6+a_2x^{12} +1) $ \\
 18&                        & $C_{3}$     &  3   & 1  & $3^{11}$  &   8 & $x^{9}+ \sum_{i=1}^8 a_i x^{i}+1$ \\

19&                        & $C_{9}$     &  3   & 3  & $3^3, 9^2$  &   2 & $x^{9}+ a_2x^6+a_1x^3+1$ \\
 \textbf{\color{blue}\color{blue}20}&                        & $C_{4}$     &  4   & 1  & $4^8$  &   5 & $x^{6}+ \sum_{i=1}^5 a_i x^{i}+1$ \\
 21&                        & $C_{8}$     &  4   & 2  & $4^3, 8^2$  &   2 & $x^6+a_2x^4+a_1x^2+1$ \\
 22&                        & $C_{7}$     &  7   & 1  & $7^5$  &   2 & $x^3+a_1x+a_2x^2+1$ \\
\hline
 23& \multirow{4}{*}{$D_{2m}$} & $V_4\times C_2$ &  2 & 2  & $2^8$     &  5 & $\prod_{i=1}^5(x^4+a_ix^2+1))$ \\
 24&                           & $ D_{10}\times C_2$  &  2 & 5  & $2^4$, 5     &  2 & $(x^{10}+a_1x^5+1)(x^{10}+a_2x^5+1)$ \\
 25&                           &$ D_{20}\times C_2$  &  2 & 10  & $2^3$, 10     &  1 & $x^{20}+a_1x^{10}+1$ \\
 26&                           &$ V_{4}\times C_4$  &  4 & 2  & $2^3, 4^2$      &  2 & $(x^4+a_1x^2+1)(x^4+a_2x^2+1)$ \\
 27&                           &$ D_{8}\times C_4$  &  4 & 4  & $2^2, 4^2$      &  1 & $x^8+a_1x^4+1$ \\
 28&                           &$G_5$  &  2 & 4  & $2^3$, $4^2$ &  2 & $(x^4-1)(x^8+a_1x^4+1)(x^8+a_2x^4+1)$  \\
 29&                           &$G_5$  &  2 & 20 & 2, 4, 20     &  0 & $x^{20}-1$  \\
 30&                           &$G_5$  &  4 & 8 & 2, $8^2$     &  0 & $x^{8}-1$  \\
 31&                           &$D_6\times C_2$  &  2 & 3  & $2^5$, 6    &  3 & $x \, \prod_{i=1}^3 (x^6+a_i x^3+1)$ \\
 32&                           &$D_{18}\times C_2$  &  2 & 9  & $2^3$, 18    &  1& $x(x^{18}+a_1x^9+1)$ \\
 33&                           &$D_{6}\times C_4$  &  4 & 3  & $2^2$, 4, 12    &  1& $x(x^6+a_1x^3+1)$ \\
 34&                           & $G_7$ &  2 & 2  & $2^5$, $4^2$ & 4 & $(x^4-1)\prod_{i=1}^4(x^4+a_ix^2+1)$ \\
 35&                           &$G_9$  &  2 & 5  & 2, $4^2$, 5 &  1 & $(x^{10}-1)(x^{10}+a_1x^5+1)$ \\
 36&                           & $G_7$ &  4 & 2  & 2, 4, $8^2$ & 1 & $(x^4-1)(x^4+a_1x^2+1)$ \\
 37&                           &$G_8$  &  2 & 2  & $2^5$, $4^2$ & 4 & $x(x^2-1)\prod_{i=1}^4(x^4+a_ix^2+1)$ \\
 38&                           & $G_8$ &  2 & 6 & $2^2$, 4, 12     &  1 & $x (x^{6} -1)(x^{12}+a_1x^6+1)$ \\
 39&                           & $G_8$ &  2 & 18 & 2, 4, 36     &  0 & $x (x^{18} -1)$ \\
 40&                           & $D_6\times C_3$ &  3 & 3 & 2, 3, 6, 9     &  1 & $x (x^{3} -1)(x^{6}+a_1x^3+1)$ \\
 41&                           & $D_{18}\times C_3$ &  3 & 9 & 2, 6, 27     & 0 & $x (x^{9}-1)$ \\
  42&                           & $G_8$ &  4 & 2 & 2, 4, $8^2$     &  1 & $x (x^{2} -1)(x^4+a_1x^2+1)$ \\
   43&                           & $G_8$ &  4 & 6 & 2, 8, 24     &  0 & $x (x^{6} -1)$ \\
  44&                           & $D_6\times C_7$ &  7 & 3 & 2, 14, 21     &  0 & $x (x^{3} -1)$ \\
   45&                           & $G_8$ &  10 & 2 & 2, $20^2$     &  0 & $x (x^{2} -1)$ \\
  46&                           & $G_9$ &  2 & 3 & $2^2, 4^2, 6$     &  2 & $x (x^{6} -1)(x^6+a_1x^3+1)(x^6+a_2x^3+1)$ \\
\hline
47 &   $A_4$                     & $K$ & 2 & 0 & $2^2$, $6^2$ & 1 & $(x^8+14x^4+1) \, f_1(x) $ \\
\hline
 48&   $S_4$                     &$G_{17}$  & 4  & 0 & 2, 4, 12 & 0 & $x^{8}+14x^4+1$ \\
 49&                             &$G_{21}$  & 2  & 0 & $4^2$, 6 & 0 & $(x^{8}+14x^4+1)(x^{12}-33x^8-33x^4+1)$ \\
\hline
50 &   $A_5$                     &  & 2 &  & 2, 5, 6 & 0 & $x^{20}-228x^{15}+494x^{10}+228x^5+1$ \\
\hline \hline
\multicolumn{8}{c}{Genus 10} \\
\hline \hline
 1& \multirow{4}{*}{$C_m$} &   $V_4$        &  2  & 2  & $2^{13}$      & 10 & $x^{22} + \sum_{i=1}^{10} a_i x^{2i} + 1$  \\
 \textbf{\color{blue}\color{blue}2}&                        &   $C_2 \times C_3$ &  3  & 2 & $2^2, 3^6$ & 5 & $x^{12} + \sum_{i=1}^5 a_i x^{2i} + 1$ \\
 \textbf{\color{blue}\color{blue}3}&                        &   $C_3^2$  &  3  & 3 & $3^6$ & 3  & $x^{12} + a_1 x^3 + a_2 x^{6}+a_3x^{9} +1$ \\
 4&                        &   $C_3\times C_4$  &  3  & 4 & $3^3, 4^2$ & 2  & $x^{12} + a_1 x^4 + a_2 x^{8} +1$ \\
 5&                        &   $C_2\times C_6$  &  6  & 2 & $2^2, 6^3$ & 2  & $x^{6} + a_1 x^2 + a_2 x^{4} +1$ \\
 6&                        & $C_{6}$  &  2  & 3 &   $2^7$, 3, 6   & 6   & $x^{21}+ \sum_{i=1}^6 a_i x^{3i} +1$ \\
 7&                        & $C_{14}$  &  2 & 7  &   $2^3$, 7, 14  &  2   & $x^{21} + a_1 x^7+a_2x^{14} +1$ \\
 8&                        & $C_{42}$  & 2 & 21  &  2, 4, 21  &  0    & $x^{21} +1$ \\
  9&                        & $C_{33}$  &  3 & 11  &  3, $11^2$   &  0    & $x^{11} +1$ \\
 10&                        & $C_{10}$  &  5 & 2  &   2, $5^3$, 10  &  2   & $x^6+a_2x^4+a_1x^2+1$ \\
 11&                        & $C_{15}$  &  5 & 3  &  3, $5^2$, 15   & 1    & $x^6+a_1x^3 +1$ \\
 12&                        & $C_{30}$  &  5 & 6  &  5, $6^2$    &  0      & $x^6 +1$ \\
 13&                        & $C_{30}$  &  6 & 5  &   $5^{2}$, 6   &  0      & $x^5  +1$ \\
 14&                        & $C_{33}$     &  11  & 3  &  $3^2$, 11  & 0 & $x^3 +1$ \\
 15&                        & $C_{42}$     &  21  & 2  & 2, 21, 42       &   0 & $x^2+1$ \\
 \textbf{\color{blue}\color{blue}16}&                        & $C_2$     &  2  & 1  & $2^{22}$        &   19 & $x(x^{20}+\sum_{i=1}^{19} a_i x^{i} +1)$ \\
 \textbf{\color{blue}\color{blue}17}&                        & $C_4$     &  2  & 2  & $2^{10}, 4^2$        &   9 & $   x( x^{20}+\sum_{i=1}^9 a_i x^{2i} +1)$ \\
 18&                        & $C_8$     &  2  & 4  & $2^5, 8^2$        &   4 & $ x(x^{20}+a_1x^4+a_2x^8+a_3x^{12}+a_4x^{16} +1)$ \\ 
 \textbf{\color{blue}\color{blue}19}&                        & $C_{10}$     &  2  & 5  & $2^4, {10}^2$  &   3 & $x (x^{20}+ a_1x^5+a_2x^{10}+a_3x^{15} +1)$ \\
 \textbf{\color{blue}\color{blue}20}&                        & $C_{3}$     &  3   & 1  & $3^{12}$  &   9 & $x^{10}+ \sum_{i=1}^9 a_i x^{i}+1$ \\
\hline
\end{tabular}
\end{table}

\addtocounter{table}{-1}
\begin{table}
\caption{(Cont.)}
\begin{tabular}{|l|l|l|l|l|l|l|l|}
\hline
Nr. & $\overline G$             & G&$n$  &$m$ & sig. & $\delta$ & Equation $y^n=f(x)$ \\
\hline 
\hline
 21&                        & $C_{6}$     &  3   & 2  & $3^5, 6^2$  &   4 & $x^{10}+ a_1x^2+a_2x^4+a_3x^6+a_4x^8+1$ \\
 22&                        & $C_{5}$     &  5   & 1  & $5^7$  &   4 & $x^{5}+ \sum_{i=1}^4 a_i x^{i}+1$ \\
 \textbf{\color{blue}\color{blue}23}&                        & $C_{6}$     & 6   & 1  & $6^6$  &   3 & $x^4+a_1x+a_2x^2a_3x^3+1$ \\
\hline 
 24& \multirow{4}{*}{$D_{2m}$} & $D_{22}\times C_2$ &  2 & 11  & $2^3$, 11     &  1 & $x^{22}+a_1x^{11}+1$ \\
 25&                           & $ V_{4}\times C_3$  &  3 & 2  & $2^3, 3^3$     &  3 & $\prod_{i=1}^3 (x^4+a_i x^2+1) $ \\
 26&                           &$ D_{6}\times C_3$  &  3 & 3  & $2^2, 3^3$,    &  2 & $(x^6+a_1x^3+1)(x^6+a_2x^3+1)$ \\
 27&                           &$ D_{12}\times C_3$  &  3 & 6  & $2^2, 3, 6$      &  1 & $(x^{12}+a_1x^6+1$ \\
 28&                           &$ D_{6}\times C_6$  &  6 & 3  & $2^2, 3, 6$      &  1 & $x^6+a_1x^3+1$ \\
 29&                           &$G_5$  &  2 & 2  & $2^7$, 4 &  5 & $(x^2-1)\prod_{i=1}^5(x^4+a_ix^2+1)$  \\
 30&                           &$G_5$  &  2 & 22 & 2, 4, 22     &  0 & $x^{22}-1$  \\
 31&                           &$D_8\times C_3$  &  3 & 4 & 2, 3, 4, 6     &  1 & $(x^4-1)(x^8+a_1x^4+1)$  \\
 32&                           &$D_{24}\times C_3$  &  3 & 12  & 2, 6, 12   &  0& $x^{12}-1$ \\
 33&                           &$G_5$  &  6 & 2 & $2^2$, 6, 12     &  1 & $(x^{2}-1)(x^4+a_1x^2+1)$  \\
 34&                           &$G_5$  &  6 & 6 &    2, 6, 12     &  0 & $x^6-1$  \\
  35&                           &$D_8$  &  2 & 2 & $2^7$, 4     &  5 & $x\prod_{i=1}^5(x^4+a_ix^2+1)$  \\
 36&                           &$D_{10}\times C_2$  &  2 & 5  & $2^4$, 10    &  2& $x(x^{10}+a_1x^5+1)(x^{10}+a_2x^5+1)$ \\
 37&                           &$D_{40}$  &  2 & 10  & $2^3$, 20    &  1 & $x(x^{20}+a_1x^{10}+1)$ \\
 38&                           &$D_{10}\times C_3$  &  3 & 5  & $2^2$, 3, 15    &  1& $x(x^{10}+a_1x^5+1)$ \\
 39&                           &$D_{24}$  &  6 & 2  & $2^2$, 6, 12    &  1 & $x(x^{4}+a_1x^{2}+1)$ \\
 40&                           & $V_4\times C_3$ &  3 & 2  & 2, $3^2$, $6^2$ & 2 & $(x^2-1)(x^4+a_1x^2+1)(x^4+a_2x^2+1)$ \\
 41&                           &$D_6\times C_3$  &  3 & 3  &  $3^2, 6^2$ &  1 & $(x^{6}-1)(x^{6}+a_1x^3+1)$ \\
 42&                           &$G_8$  &  2 & 4  & $2^3$, 4, 8 & 2 & $x(x^4-1)(x^8+a_1x^4+1)(x^8+a_2x^4+1)$ \\
 43&                           & $G_8$ &  2 & 20 & 2, 4, 40    &  0 & $x (x^{20} -1)$ \\
 44&                           & $V_4\times C_3$ &  3 & 2 & 2,$3^2, 6^2$   &  2& $x(x^2-1)(x^4+a_1x^2+1)(x^4+a_2x^2+1)$ \\
 45&                           & $D_{20}\times C_3$ &  3 & 10 & 2, 6, 30   & 0 & $x (x^{10} -1)$ \\
 46&                           & $D_{10}\times C_5$ &  5 & 5 & 2, 10, 25   & 0 & $x (x^{5}-1)$ \\
 47&                           & $G_8$ &  6 & 4 & 2, 12, 24     &  0 & $x (x^{4} -1)$ \\
 48&                           & $V_4\times C_{11}$ &  11 & 2 & 2, $22^2$     &  0 & $x (x^{2} -1)$ \\
 49&                           & $G_9$ &  2 & 2 & $2^4, 4^3$     & 4 & $x(x^4-1)\prod_{i=1}^4(x^4+a_ix^2+1)$ \\
 50&                           & $G_9$ &  2 & 5 & $2, 4^2, 10$     & 1 & $x(x^{10}-1)(x^{10}+a_1x^5+1)$ \\
\hline
51 &   $A_4$                     & & 3 & 0 & 2, $3^3$ & 1 & $ f_1 (x) $ \\
52&                              & & 2 & 0 & 2, 3, 4, 6 & 1 & $x(x^4-1)(x^4+ 2 \sqrt{-3} \, x^2+1)\, f_1(x)$\\
\hline
 53&   $S_4$                     &$G_{18}$  & 6  & 0 & 2, 3, 24 & 0 & $x(x^4-1)$ \\
 54&                             &$S_4\times C_3$  & 3  & 0 & 3, 4, 6 & 0 & $x^{12}-33x^8-33x^4+1$ \\
\hline
55 &   $A_5$                     &$A_5\times C_3$  & 3 & 0 & 2, 3, 15 & 0 & $x(x^{10}+11x^5-1)$ \\
\hline
\end{tabular}
\end{table}
 
\end{Small} 



\nocite{*}

\bibliographystyle{plainurl}

\bibliography{ref}{}


\end{document}